\newtheorem{theorem}{Theorem}
\newtheorem{corollary}{Corollary}[theorem]
\newcommand{\qed}{\hfill$\Box$}
\newenvironment{proof}[1][Proof]{\textit{\textbf{#1:}} }
\title{Exploring structural properties of $k$-trees and block graphs }
\author{Lilian Markenzon \\
	NCE,PPGI - Universidade Federal do Rio de Janeiro\\ 
Rio de Janeiro, RJ, Brazil\\
	\texttt{markenzon@nce.ufrj.br} \\
	\And
	Allana S. S. Oliveira \\
Instituto Federal Sudeste De Minas \\ 
Campus Juiz De Fora, Juiz De Fora, MG, Brazil.\\
	\texttt{allana.sthel@gmail.com} \\
	 \AND
	Cybele T.M.Vinagre \\
	 Instituto de Matem\'atica e Estat\'{\i}stica \\
	 Universidade Federal Fluminense, Niter\'oi, RJ, Brazil \\
	\texttt{cybele\_vinagre@id.uff.br} \\
}
\renewcommand{\headeright}
\begin{document}
\maketitle

\begin{abstract}
	We present a new characterization of $k$-trees based on their  reduced clique graphs and 
$(k+1)$-line graphs, which are block graphs.
We explore structural properties of these two classes,  
showing that the number of clique-trees of a $k$-tree $G$ equals the number of spanning trees
of the $(k+1)$-line graph of $G$.  
This relationship allows to present  a new approach for determining
the number of spanning trees of any connected block graph.
We show that these results can be accomplished in linear time complexity.
\end{abstract}

$k$-tree, block graph, reduced clique graph, $k$-line graph, number of clique trees, number of spanning trees
\keywords{$k$-tree \and Block graph \and Reduced clique graph \and $k$-line graph \and Number of clique trees\and Number of spanning trees}

\section{Introduction}
The class of $k$-trees, introduced in 1968 by Beineke and Pippert \cite{beineke1969number},
has an inductive definition which naturally extends the definition of a tree.
It is a subclass of chordal graphs and it has been the subject of considerable research. 
In this paper we present a new characterization of $k$-trees, based on two different associated structures: 
the  reduced clique graph, notion introduced by Galinier \textit{et al.} in \cite{galinier1995chordal}, 
and the $k$-line graph, the generalization of the line graph operation introduced by L\^{e} in \cite{le1993perfect}.

A clique tree representation of a chordal graph 
is particularly useful, allowing the development of efficient algorithms that take advantage 
of the compactness of the representation.
In \cite{ho1989counting}, an algorithm that generates any clique tree of a chordal graph $G$ is presented, 
 based upon the assumption that the sets of maximal cliques and minimal vertex separators of $G$ are given. 
From this result, the authors derive an exact formula 
of counting  the number of clique trees of a labeled connected chordal graph. 
In this paper, we resume the formula, showing that it can be simplified for $k$-trees.

Further, the number of spanning trees of a connected graph is a well-known invariant,
provided by the Kirchhoff's matrix tree theorem \cite{biggs1993algebraic}, that can be computed in polynomial time as any cofactor of the Laplacian matrix of the graph. 
In this paper, we show that if $G$ is a $k$-tree, then the number of clique trees of $G$ equals 
the number of spanning trees of the $(k+1)$-line graph of $G$, which is a block graph as proved in \cite{oliveira2021k+}. Moreover, we calculate the number of spanning trees of any connected block graph. Both results are computed in linear time complexity.

\section{Basic concepts}
\label{sec:headings}

Let $G  =(V,E)$,  be a  graph, 
where $|V|=n$  and  $|E| = m$. 
The {\em set of neighbors\/} of a vertex $v \in V$ is denoted by
$N(v) = \{ w \in V; \{v,w\} \in E\}$. 
The \emph{degree} of a vertex $v\in V$ is $d(v)=|N(v) |$. 
For any $S \subseteq V$, 
the subgraph of $G$ induced by $S$ is denoted $G[S]$. 
 If $G[S]$ is a complete graph then $S$ is a \emph{clique} in $G$. 
A graph is said to be $H$-free if it contains no $H$ as an induced subgraph.
A vertex $v\in V$ is said to be {\em
simplicial\/} in $G$ when $N(v)$ is a clique in $G$.

Basic concepts about chordal graphs are assumed to be known and 
can be found in  Blair and Peyton \cite{blair1993introduction}  and Golumbic \cite{golumbic2004algorithmic}.
In this section, the most pertinent concepts are reviewed.

A subset $S \subset V$ is
a {\em separator} of $G$ if at least two vertices in the same connected
component of $G$ are in two distinct connected components of
$G[V\setminus S]$. 

Let $G = (V, E)$ be a chordal graph and $u,v  \in V$. 
A subset $S \subset V$  is a {\em vertex separator}  for
non-adjacent vertices $u$  and $v$  (a $uv$-separator) if the
removal of $S$ from the graph separates $u$ and $v$  into distinct
connected components. 
If no proper subset of $S$  is a $uv$-separator then $S$ is a {\em minimal $uv$-separator}. 
When the pair of vertices remains unspecified, we refer to $S$  as a {\em
minimal vertex separator} ({\em mvs}). 
The set of minimal vertex separators is denoted by $\mathbb{S}$.

The {\it clique-intersection graph\/} of a chordal graph $G$ is the
connected weighted graph whose vertices are the maximal cliques of $G$ and whose
edges connect vertices corresponding to non-disjoint maximal cliques.
Each edge is assigned an integer weight, given by the cardinality of the
intersection between the maximal cliques represented by its endpoints.
Every maximum-weight spanning tree of the clique-intersection graph of $G$
is called a {\it clique tree\/} of $G$.
The set of maximal cliques of $G$ is denoted by $\mathbb{Q}$.
A {\em simplicial clique} is a maximal clique containing at least one simplicial vertex.

For a chordal graph $G$ 
and a clique tree $T$ of $G$,
a set $S\subset V$ is a \emph{mvs}  of $G$ if
and only if $S= Q\cap Q' $ for some edge $\{Q, Q'\}$ in $T$. 
Moreover, the multiset  ${\mathbb M}$ of
the minimal vertex separators of $G$ is the same for every
clique tree of $G$.
The {\em multiplicity} of the minimal vertex separator $S$, denoted by $ \mu (S)$,   
is the number of times that $S$ appears in  ${\mathbb M}$. 
The determination of the minimal vertex separators and their multiplicities 
can be performed in linear time \cite{markenzon2010one}.

The maximal cliques $Q$ and $Q^{\prime}$ of $G$ form  a {\em separating  pair}
 if $Q \cap Q^{\prime} \neq \emptyset$ and every path in $G$ from a vertex of
 $Q\backslash  Q^{\prime}$ to a vertex of $Q^{\prime} \backslash Q$ contains a vertex of $Q \cap Q^{\prime}$.
The {\em reduced clique graph} $C_{r}(G)$ 
 of $G$, introduced by Galinier \textit{et al.} in \cite{galinier1995chordal}, is the graph whose vertices are maximal cliques of $G$
and whose edges $\{Q,Q^{\prime}\}$ are between cliques $Q$ and $Q^{\prime}$ forming separating pairs.

\begin{theorem}{\bf\cite{habib2012reduced}} \label{theo:habib1}
A set $S$ is a minimal vertex separator of a chordal graph $G$ if and only if there exist maximal cliques 
$Q$ and $Q^{\prime}$ of $G$ forming a separating pair such that $S=Q \cap Q^{\prime}$.
\end{theorem}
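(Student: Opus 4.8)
The plan is to prove both implications of the biconditional, drawing on the clique-tree characterization of minimal vertex separators recalled above and on the elementary fact that any two vertices lying in a common maximal clique are adjacent.

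For the backward direction, suppose $Q$ and $Q'$ are distinct maximal cliques forming a separating pair and set $S = Q \cap Q' \neq \emptyset$. Since neither maximal clique contains the other, I can pick $u \in Q \setminus Q'$ and $v \in Q' \setminus Q$. First I would argue that $u$ and $v$ are non-adjacent: an edge $\{u,v\}$ would be a path from $Q \setminus Q'$ to $Q' \setminus Q$ avoiding $S$, contradicting the separating-pair condition. That same condition then forces every $u$--$v$ path to meet $S$, so $S$ is a $uv$-separator. To see it is minimal, I would use that every $s \in S$ lies in both $Q$ and $Q'$ and is therefore adjacent to both $u$ and $v$; hence for any proper subset $S' \subsetneq S$, choosing $s \in S \setminus S'$ yields the path $u$--$s$--$v$ avoiding $S'$, so $S'$ fails to separate $u$ and $v$. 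Thus $S$ is a minimal $uv$-separator, that is, a minimal vertex separator.

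For the forward direction, suppose $S$ is a minimal vertex separator. By the clique-tree characterization stated above, $S = Q \cap Q'$ for some edge $\{Q, Q'\}$ of a clique tree $T$ of $G$, and $S \neq \emptyset$. I would delete this edge to split $T$ into subtrees $T_Q \ni Q$ and $T_{Q'} \ni Q'$, and let $V_Q$ and $V_{Q'}$ be the unions of the cliques in each part. Using the induced-subtree property of clique trees --- the maximal cliques containing any fixed vertex form a connected subtree of $T$ --- I would show $V_Q \cap V_{Q'} = S$: a vertex in both unions would have its clique-subtree straddling the deleted edge, forcing it into $Q \cap Q'$. Since every edge of $G$ lies inside a single maximal clique, hence entirely within $V_Q$ or within $V_{Q'}$, any path crossing from $V_Q \setminus V_{Q'}$ to $V_{Q'} \setminus V_Q$ must pass through $S$. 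Finally, $Q \setminus Q' \subseteq V_Q \setminus V_{Q'}$ and $Q' \setminus Q \subseteq V_{Q'} \setminus V_Q$ (a vertex of $Q \setminus Q'$ lying in $V_{Q'}$ would again be forced into $S$), so every path from $Q \setminus Q'$ to $Q' \setminus Q$ meets $S = Q \cap Q'$, establishing that $Q$ and $Q'$ form a separating pair.

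I expect the forward direction to be the main obstacle, since it rests on the induced-subtree property of clique trees and the care needed to verify that the tree-split cut set is exactly $S$ and that it separates the two clique remainders; the backward direction is comparatively direct once the non-adjacency of $u$ and $v$ is observed.
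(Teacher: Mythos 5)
Your proof is correct, but there is nothing in the paper to compare it against: the paper states this theorem as a quoted result of Habib and Stacho \cite{habib2012reduced} and gives no proof of its own. Your argument is a legitimate, self-contained derivation from exactly the ingredients the paper recalls in Section 2. The backward direction (separating pair $\Rightarrow$ mvs) is handled the standard way: non-adjacency of $u\in Q\setminus Q'$ and $v\in Q'\setminus Q$ follows from the separating-pair condition, separation is immediate from that same condition, and minimality follows because each $s\in S$ is adjacent to both $u$ and $v$, so no proper subset of $S$ can separate them. The forward direction correctly leans on the clique-tree characterization of minimal vertex separators (recalled in the paper) plus the induced-subtree property of clique trees, and your verification that the cut induced by deleting the edge $\{Q,Q'\}$ of the clique tree is exactly $S=Q\cap Q'$ is sound: a vertex in both sides of the split would have a disconnected clique subtree unless it lies in $Q\cap Q'$, and every edge of $G$ lives inside a single maximal clique, hence inside one side of the split. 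One small point worth making explicit: your forward direction presupposes that $G$ is connected (so that a clique tree exists and $S=Q\cap Q'\neq\emptyset$, as required by the definition of a separating pair); the theorem as stated says only ``chordal graph,'' so a sentence reducing to a connected component, or an explicit connectivity hypothesis, would make the argument airtight. Note also that your forward direction proves slightly more than asked: every edge of every clique tree yields a separating pair, which is consistent with Theorem \ref{theo:cliqueTree_spanninTree} (clique trees are subgraphs of $C_r(G)$).
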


\begin{theorem} {\bf\cite{galinier1995chordal}} \label{theo:cliqueTree_spanninTree}
Let $G$ be a connected chordal graph. 
A tree $T$ is a clique tree of $G$ if and only if $T$ is a maximum-weight spanning tree of $C_r(G)$ 
where the weight of each edge $\{Q,Q'\}$ is defined as $|Q\cap Q'|$. 
Moreover, the reduced clique graph $C_r(G)$ is precisely the union of all clique trees of $G$.
\end{theorem}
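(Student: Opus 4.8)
The plan is to compare $C_r(G)$ directly with the clique-intersection graph, which I will call $K(G)$. Both graphs share the vertex set $\mathbb{Q}$, and since every separating pair consists of non-disjoint cliques, we have $E(C_r(G)) \subseteq E(K(G))$ with the two weightings $|Q \cap Q'|$ agreeing on common edges. Because a clique tree is \emph{by definition} a maximum-weight spanning tree of $K(G)$, the whole statement reduces to showing that the maximum-weight spanning trees of $K(G)$ and of $C_r(G)$ coincide, and that every edge of $C_r(G)$ is used by at least one of them. I would isolate this in two claims: (A) every clique tree is a spanning subgraph of $C_r(G)$, and (B) every edge of $C_r(G)$ lies in some clique tree.

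For Claim (A), let $T$ be a clique tree and $\{Q,Q'\}$ an edge of $T$. By the clique-tree property recalled above, $S = Q \cap Q'$ is a minimal vertex separator, and the running-intersection structure of $T$ shows that deleting the edge partitions the cliques into two sides whose vertex unions meet exactly in $S$ and between which $G$ has no edges; hence every path from $Q \setminus S$ to $Q' \setminus S$ meets $S$, so $Q$ and $Q'$ form a separating pair and $\{Q,Q'\} \in E(C_r(G))$. One may also invoke Theorem~\ref{theo:habib1} to pass from the separator $S$ to a separating pair. Thus every edge of every clique tree is an edge of $C_r(G)$.

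The equivalence then follows from a weight-sandwich argument. Write $W_K$ and $W_{C_r}$ for the maximum spanning-tree weights in $K(G)$ and $C_r(G)$. Since $C_r(G)$ is a spanning subgraph of $K(G)$, every spanning tree of $C_r(G)$ is one of $K(G)$, so $W_{C_r} \le W_K$; by Claim (A) some maximum-weight spanning tree of $K(G)$ lies entirely in $C_r(G)$, so $W_{C_r} \ge W_K$, giving $W_{C_r} = W_K$. Consequently a spanning tree attains $W_K$ in $K(G)$ if and only if it lies in $C_r(G)$ and attains $W_{C_r}$ there; that is, the clique trees are exactly the maximum-weight spanning trees of $C_r(G)$.

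For the final assertion, Claim (A) already gives that the union of all clique trees is contained in $C_r(G)$, so the work is the reverse inclusion, Claim (B), which I expect to be the main obstacle. Fix an edge $\{Q,Q'\}$ of $C_r(G)$ with separator $S = Q \cap Q'$. Using the standard cycle/cut characterization of maximum-weight spanning trees, it suffices to show that $C_r(G)$ contains no path from $Q$ to $Q'$ all of whose edges have weight strictly larger than $|S|$; any such path could then be exchanged for $\{Q,Q'\}$ to produce a maximum-weight spanning tree through the edge. I would derive this from the separating-pair property: since removing $S$ disconnects $Q \setminus S$ from $Q' \setminus S$ in $G$, the maximal cliques split according to the components of $G[V \setminus S]$, and a path of weight exceeding $|S|$ would force a clique straddling two such components, a contradiction. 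Making this last step fully rigorous --- controlling how the separator $S$ propagates along edges of $C_r(G)$ of weight exceeding $|S|$ --- is the delicate point, and is where the structural properties of separating pairs established by Galinier \textit{et al.} do the real work.
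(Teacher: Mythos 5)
This theorem is imported by the paper from Galinier \emph{et al.}~\cite{galinier1995chordal} with no proof given, so there is no internal argument to compare against; judged on its own terms, your proof is correct and essentially complete, and its structure (Claim (A) plus weight-sandwich for the equivalence, Claim (B) via an exchange argument for the union statement) is a sound way to obtain the result from scratch. The sandwich step is airtight: once every clique-tree edge lies in $C_r(G)$, the maximum spanning-tree weights of the clique-intersection graph $K(G)$ and of its spanning subgraph $C_r(G)$ coincide, and the two families of optimal trees are identified exactly as you say. Your running-intersection argument for Claim (A) is the standard one and works; note, however, that the aside that one ``may also invoke'' Theorem~\ref{theo:habib1} is misleading, since that theorem only guarantees the existence of \emph{some} separating pair realizing the separator $S$, not that the particular tree edge $\{Q,Q'\}$ is itself a separating pair, so it cannot substitute for the argument you gave. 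Finally, the step you flag as the ``delicate point'' in Claim (B) in fact closes in a few lines, with no deeper machinery needed: for any maximal clique $R$, the set $R\setminus S$ is a nonempty clique (no maximal clique can be contained in $S$, since $S$ is properly contained in $Q$), hence it lies in a single component $c(R)$ of $G[V\setminus S]$; if an edge $\{R,R'\}$ of $C_r(G)$ has weight greater than $|S|$, then $R\cap R'$ contains a vertex outside $S$, and that vertex lies in both $R\setminus S$ and $R'\setminus S$, forcing $c(R)=c(R')$; therefore along any $Q$--$Q'$ path in $C_r(G)$ all of whose edges have weight exceeding $|S|$, the component $c(\cdot)$ is invariant, contradicting the fact that the separating-pair property places $c(Q)$ and $c(Q')$ in distinct components of $G[V\setminus S]$. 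With that short induction inserted, your cycle/cut exchange yields Claim (B), and the whole theorem follows.
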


As Theorem \ref{theo:cliqueTree_spanninTree} states, not all spanning trees of $C_r(G)$ are clique trees of $G$. 
In Figure \ref{fig:exem_CrG_e_spantree}, for example, the tree $T$ is a spanning tree of $C_r(G)$ but $T$ is not a clique tree of $G$.

\begin{figure}[h]
\centering
\includegraphics[scale=0.23]{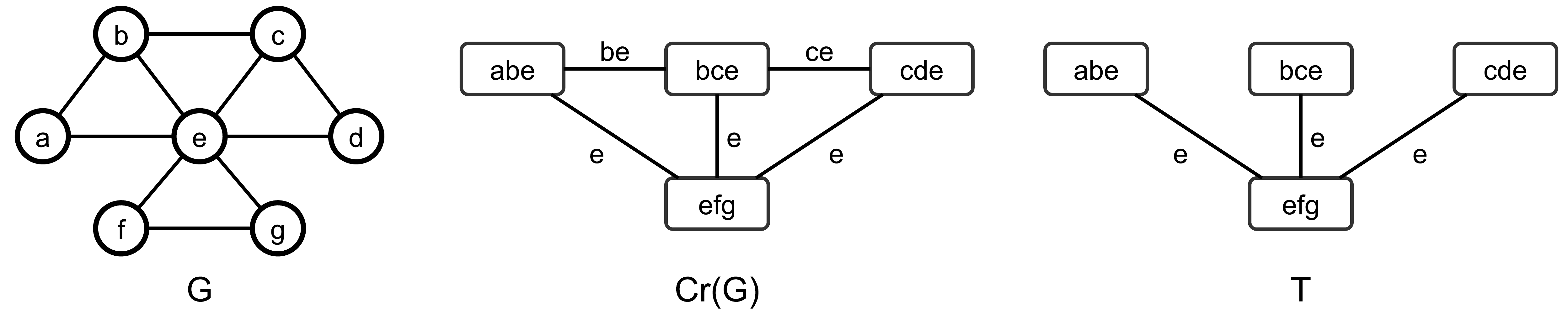}
\caption{$G$, $C_r(G)$ and a spanning tree $T$ of $C_r(G)$.}
\label{fig:exem_CrG_e_spantree}
\end{figure}

The generalization of the line graph operation which we  apply in our work is the one introduced by L\^{e} in \cite{le1993perfect}: for an integer $k\geq2$, the \emph{$k$-line graph} of $G$, denoted by $\ell_k(G)$, is the graph whose vertices are the  $k$-cliques in $G$ and where two distinct such vertices are adjacent if and only if they have, in $G$, $k-1$ vertices in common.

\section{ \emph{k}-trees and their  \emph{(k+1)}-line graphs and reduced clique graphs}

A \emph{$k$-tree} is a chordal graph that can be recursively defined as follows. 
The complete graph with $k+1$ vertices is a $k$-tree. 
A $k$-tree with $n+1$ vertices $(n\geq k+1)$ can be constructed from a $k$-tree with $n$ vertices by adding 
a vertex adjacent to all vertices of a $k$-clique $C$ of the existing $k$-tree, and only to these vertices. 
Theorems \ref{theo:Rose} and \ref{theo:cardinalidadeSeparadoresKarvores} are characterizations of $k$-trees.

\begin{theorem}{\rm\cite{rose1974simple}}\label{theo:Rose}
A chordal graph $G = (V, E)$ is a $k$-tree if and only if
\begin{enumerate}
\item $G$ is connected,
\item $G$ has a $k$-clique but no $k + 2$ clique,
\item every minimal vertex separator of $G$ is a $k$-clique.
\end{enumerate}
\end{theorem}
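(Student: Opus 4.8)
The plan is to prove both implications by induction on $n=|V|$, exploiting the clique-tree machinery and the simplicial-vertex structure of chordal graphs reviewed above. The forward implication follows almost directly from the recursive definition, so the real content lies in the converse.

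For the forward direction, I would suppose $G$ is a $k$-tree. The base case $G=K_{k+1}$ plainly satisfies all three conditions: it is connected, its unique maximal clique has size $k+1$ (so it contains a $k$-clique but no $(k+2)$-clique), and being complete it has no minimal vertex separator, making condition (3) vacuous. For the inductive step, let $G$ arise from a $k$-tree $G'$ by attaching a simplicial vertex $v$ whose neighbourhood is a $k$-clique $C$ of $G'$. Connectivity is immediate. Every clique of $G$ either lies in $G'$ or has the form $\{v\}\cup C'$ with $C'\subseteq C$, so no clique exceeds size $k+1$ while a $k$-clique still exists. Finally, the minimal vertex separators of $G$ are those of $G'$ together with the $k$-clique $C$ (arising as $N(v)$), all of which are $k$-cliques by the induction hypothesis and by construction.

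For the converse, I would assume $G$ is chordal and satisfies (1)--(3). If $G$ is complete then its single maximal clique has size $k+1$ by (2), so $G=K_{k+1}$ is a $k$-tree, which serves as the base case. Otherwise $G$ has at least two maximal cliques, and I would fix a clique tree $T$ of $G$. Since every edge $\{Q,Q'\}$ of $T$ satisfies $Q\cap Q'\in\mathbb{S}$, condition (3) forces $|Q\cap Q'|=k$; combined with (2) and the maximality of the cliques (if $|Q|=k$ then $Q=Q\cap Q'\subseteq Q'$, contradicting maximality), this yields $|Q|=k+1$ for every maximal clique incident to an edge of $T$, hence, as $G$ is connected with more than one clique, for every maximal clique of $G$. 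Next I would pick a leaf $Q$ of $T$ with unique neighbour $Q'$ and set $S=Q\cap Q'$. Then $|S|=k$ and $Q\setminus S=\{v\}$ is a single vertex appearing in no other maximal clique, so $v$ is simplicial with $N(v)=S$, a $k$-clique. Deleting $v$ removes exactly the leaf $Q$ from $T$, so $G-v$ is chordal, connected, still has all maximal cliques of size $k+1$ and all minimal vertex separators equal to $k$-cliques; by induction $G-v$ is a $k$-tree, and re-attaching $v$ to the $k$-clique $S$ exhibits $G$ as a $k$-tree.

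The main obstacle is the converse, and within it the step asserting that deletion of the leaf vertex $v$ preserves condition (3). The delicate point is twofold: I must verify that no new minimal vertex separator is created in $G-v$ and that none of the surviving separators changes its vertex set. The clique-tree description of minimal vertex separators, namely that they are exactly the edge-intersections $Q\cap Q'$ of a clique tree, is what makes this tractable: removing a leaf of $T$ produces a clique tree of $G-v$, so the multiset of separators of $G-v$ is obtained from that of $G$ by discarding a single $k$-clique, and no separator can be introduced or altered.
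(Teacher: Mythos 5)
The paper offers no proof of Theorem~\ref{theo:Rose}: it is quoted from Rose \cite{rose1974simple} as a known characterization, so there is no internal argument to compare yours against. Taken on its own, your proof is sound and is essentially the classical one: induction on $n$, driven by the clique-tree facts that the minimal vertex separators are exactly the intersections $Q\cap Q'$ over edges of a clique tree, that a leaf clique $Q$ of a clique tree contains a vertex $v$ lying in no other maximal clique (hence $v$ is simplicial with $N(v)=Q\setminus\{v\}$), and that deleting $v$ deletes the leaf $Q$ from the clique tree without creating or altering any separator. The converse is carried out correctly and in the right level of detail; in the forward direction the claim that the separators of $G$ are those of $G'$ plus one copy of $C=N(v)$ is asserted rather than proved, but it follows by the same clique-tree mechanism (attach $C\cup\{v\}$ as a leaf to a maximal clique of $G'$ containing $C$), so this is a presentational gap only.

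One genuine caveat concerns your base case. You claim that a complete graph satisfying condition (2) must be $K_{k+1}$, but $K_k$ also has a $k$-clique, no $(k+2)$-clique, and no minimal vertex separator, so it satisfies (1)--(3); under the paper's definition of $k$-tree (which starts the recursion at $K_{k+1}$) it is \emph{not} a $k$-tree. This boundary case is really an imprecision in the statement as transcribed here (Rose's own convention admits $K_k$ as a $k$-tree, under which your base case is fine after adjustment), but as written your sentence ``its single maximal clique has size $k+1$ by (2)'' does not follow. Note that inside your induction the complete graph eventually reached is always the surviving clique $Q'$, which has size $k+1$, so the defect can only occur when the input graph itself is $K_k$; it suffices to either exclude that graph explicitly or adopt Rose's convention.
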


\begin{theorem}  \label{theo:cardinalidadeSeparadoresKarvores}
Let $G$ be a chordal graph, $\mathbb{Q}$ the set of its maximal cliques and $\mathbb{S}$ the set of its minimal vertex separators.
Graph G is a $k$-tree if and only if
$|Q|=k+1$, for all $Q \in \mathbb{Q}$,  and $|S|=k$, for all $S \in \mathbb{S}$.
\end{theorem}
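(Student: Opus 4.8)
The plan is to prove both implications by leveraging Rose's characterization (Theorem~\ref{theo:Rose}) together with the clique-tree/minimal-vertex-separator correspondence recalled in the excerpt. The auxiliary fact I would use throughout is that every \emph{mvs} of a chordal graph is itself a clique: fixing a clique tree $T$, any \emph{mvs} $S$ equals $Q\cap Q'$ for some edge $\{Q,Q'\}$ of $T$, and the intersection of two cliques is again a clique; hence every \emph{mvs} is a clique, and its cardinality is exactly its number of vertices. Throughout I take $G$ to be connected (see the closing remark).

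\textbf{Forward direction} ($k$-tree $\Rightarrow$ size conditions). Assume $G$ is a $k$-tree. By Theorem~\ref{theo:Rose}, every \emph{mvs} of $G$ is a $k$-clique, so $|S|=k$ for all $S\in\mathbb{S}$, which settles the separator condition immediately. For the maximal cliques I would argue via a clique tree $T$ of $G$. If $G$ is complete then $G=K_{k+1}$ and its unique maximal clique has $k+1$ vertices. Otherwise $T$ has at least two nodes, so every maximal clique $Q$ has a neighbour $Q'$ in $T$; then $S=Q\cap Q'$ is a \emph{mvs}, whence $|S|=k$, and $S\subsetneq Q$ because $Q\not\subseteq Q'$ by maximality. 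This gives $|Q|\ge k+1$, while Rose's condition that $G$ has no $(k+2)$-clique gives $|Q|\le k+1$; therefore $|Q|=k+1$ for every $Q\in\mathbb{Q}$.

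\textbf{Reverse direction} (size conditions $\Rightarrow$ $k$-tree). Assume $G$ is chordal with $|Q|=k+1$ for all $Q\in\mathbb{Q}$ and $|S|=k$ for all $S\in\mathbb{S}$; I would verify the three conditions of Theorem~\ref{theo:Rose}. Connectivity is the standing hypothesis (condition~1). Since every maximal clique has $k+1$ vertices, $G$ contains a $(k+1)$-clique, hence a $k$-clique, but no $(k+2)$-clique, giving condition~2. For condition~3, let $S$ be any \emph{mvs}; by the auxiliary fact above $S$ is a clique, and by hypothesis $|S|=k$, so $S$ is a $k$-clique. Thus all three conditions hold and $G$ is a $k$-tree.

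\textbf{Main obstacle and a caveat.} The only genuinely non-formulaic step is the forward-direction argument that every maximal clique has size \emph{exactly} $k+1$ rather than merely at most $k+1$: the idea is to exhibit, for each maximal clique, an incident clique-tree edge whose associated separator has size $k$ and is properly contained in the clique. I would also flag a subtlety in the reverse direction: the separator condition is vacuously satisfied by a disjoint union of two $(k+1)$-cliques, which is not a $k$-tree, so connectivity must genuinely be assumed; this is harmless, as it is implicit in the statement given that $k$-trees are connected. With $G$ connected, the argument above is complete.
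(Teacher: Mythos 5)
Your proposal is correct, but note that the paper itself offers no proof of this statement at all: Theorem~\ref{theo:cardinalidadeSeparadoresKarvores} is simply stated (after Theorem~\ref{theo:Rose}) as a known characterization of $k$-trees, so there is no argument in the paper to compare against. Your derivation is exactly the argument the paper leaves implicit: Rose's characterization handles the separator condition and the upper bound $|Q|\le k+1$, the clique-tree/\emph{mvs} correspondence (every \emph{mvs} is $Q\cap Q'$ for a clique-tree edge, hence a clique) supplies both the lower bound $|Q|\ge k+1$ in the forward direction and condition~3 of Theorem~\ref{theo:Rose} in the reverse direction, and all steps check out, including the treatment of the complete case $G=K_{k+1}$. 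Your caveat about connectivity is also a fair catch: the paper's statement says only ``chordal graph,'' and the reverse implication genuinely needs connectivity (the disjoint union of two copies of $K_{k+1}$ satisfies the size conditions under the convention that it has no minimal vertex separators, yet is not a $k$-tree), so making that hypothesis explicit strengthens rather than weakens the write-up.
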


In \cite{harary1969graph}, it is proved that a graph is the line graph of a tree if and only if it is
 a  connected block graph which is $K_{1,3}$-free. In \cite{oliveira2021k+}, the graphs that are  $(k+1)$-line graphs of  general $k$-trees are characterized. 
Next theorem merges both results.

\begin{theorem}{\rm\cite{oliveira2021k+},\cite{harary1969graph}}\label{teo:grafo(k+1)linhaDeKarvore}
Let $k\geq 1$ be an integer. 
A graph is the $(k+1)$-line graph of a $k$-tree if and only if it is a
connected block graph which is $K_{1,k+2}$-free.
\end{theorem}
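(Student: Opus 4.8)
The plan is to prove both implications directly, reducing to the characterizations of $k$-trees already available. Throughout I write $H=\ell_{k+1}(G)$ and recall that, by Theorem~\ref{theo:cardinalidadeSeparadoresKarvores}, the maximal cliques of a $k$-tree $G$ are exactly its $(k+1)$-cliques; hence the vertices of $H$ are precisely the maximal cliques of $G$, and two of them, $Q$ and $Q'$, are adjacent in $H$ if and only if $|Q\cap Q'|=k$.

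First I would settle the forward implication: if $G$ is a $k$-tree then $H$ is a connected, $K_{1,k+2}$-free block graph. Connectedness is immediate, since any two maximal cliques adjacent in a clique tree of $G$ intersect in a minimal vertex separator, which by Theorem~\ref{theo:cardinalidadeSeparadoresKarvores} has exactly $k$ vertices; thus every clique-tree edge is an edge of $H$, and the clique tree spans all vertices of $H$. For the $K_{1,k+2}$-free property, fix a vertex $Q$ of $H$ with $|Q|=k+1$. Every neighbour of $Q$ has the form $(Q\setminus\{v\})\cup\{w\}$ for a unique $v\in Q$, and two neighbours that delete the same vertex $v$ share the $k$-set $Q\setminus\{v\}$ and are therefore adjacent in $H$. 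Hence a set of pairwise non-adjacent neighbours of $Q$ must delete pairwise distinct vertices of $Q$, so it has at most $k+1$ elements, and no induced $K_{1,k+2}$ can occur.

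The core of the forward direction is the block-graph property, for which I would use the characterization of connected block graphs as the connected chordal graphs with no induced diamond $K_4-e$. The key lemma is a Helly-type statement: if $Q_1,Q_2,Q_3$ are distinct maximal cliques of $G$ that pairwise intersect in exactly $k$ vertices, then $Q_1\cap Q_2=Q_1\cap Q_3=Q_2\cap Q_3$. I expect this to be the main obstacle, and I would prove it by contradiction: if the three pairwise intersections were not all equal, a short case analysis shows that $Q_1\cup Q_2\cup Q_3$ contains $k+2$ pairwise adjacent vertices, i.e.\ a $K_{k+2}$, which is impossible in a $k$-tree by Theorem~\ref{theo:Rose}. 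Granting the lemma, a diamond on $Q_1,Q_2,Q_3,Q_4$ (say with $Q_2\not\sim Q_4$ and all other pairs adjacent) would force, via the two triangles $Q_1Q_3Q_2$ and $Q_1Q_3Q_4$, the common $k$-set $Q_1\cap Q_3$ to be contained in both $Q_2$ and $Q_4$; then $|Q_2\cap Q_4|\ge k$ and $Q_2\sim Q_4$, a contradiction. Chordality of $H$ follows along the same lines (an induced cycle of length $\ge 4$ yields, through consecutive triangles, coincident separators and hence a chord), so $H$ is a connected $K_{1,k+2}$-free block graph whose blocks are exactly the families $\{Q\in\mathbb{Q}:S\subseteq Q\}$ indexed by the minimal vertex separators $S$.

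For the converse I would reconstruct $G$ from $H$ using the block-cut-vertex tree. Assuming $H$ is a connected $K_{1,k+2}$-free block graph, the hypothesis guarantees that every vertex of $H$ lies in at most $k+1$ blocks (a vertex in $t$ blocks yields, by choosing one further neighbour in each block, an induced $K_{1,t}$). Rooting the block-cut-vertex tree, I would process the blocks from the root outward and build $G$ by the recursive $k$-tree rule of Theorem~\ref{theo:Rose}: to the root block $B_0$ I associate a fresh $k$-clique $S_{B_0}$ and realize each vertex $Q\in B_0$ as the $(k+1)$-clique $S_{B_0}\cup\{x_Q\}$ with $x_Q$ a new simplicial vertex; to each child block $B$ sharing a cut vertex $Q$ I assign a $k$-subset $S_B\subset Q$ distinct from the subsets already used at $Q$, which is possible precisely because at most $k+1$ blocks meet at $Q$ and $Q$ has exactly $k+1$ distinct $k$-subsets, and I attach the remaining members of $B$ as new simplicial vertices over $S_B$. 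Since each vertex is added as a simplicial vertex on a $k$-clique, $G$ is a $k$-tree. It then remains to verify that $\ell_{k+1}(G)=H$: the vertices of both are in natural bijection with the cliques $Q$; two cliques in a common block share the associated $k$-set and are adjacent in $\ell_{k+1}(G)$, while the use of fresh vertices and distinct separators keeps cross-block intersections below $k$. Checking this faithfulness — that the construction introduces no unintended $k$-intersection between cliques of different blocks — is the delicate point of the converse, and I would handle it by induction on the number of blocks, using that each newly added simplicial vertex belongs to a single maximal clique at the moment of its insertion.
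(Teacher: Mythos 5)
The paper does not prove this theorem at all---it is quoted as a merger of two known results (Harary's characterization of line graphs of trees and the characterization of $(k+1)$-line graphs of $k$-trees in \cite{oliveira2021k+})---so your attempt is necessarily an independent, from-scratch proof. Most of it holds up: the identification of the vertices of $\ell_{k+1}(G)$ with the maximal cliques of a $k$-tree, the connectedness argument via a clique tree, the $K_{1,k+2}$-freeness argument, and the Helly-type lemma (three maximal cliques pairwise meeting in $k$ vertices have a common $k$-set, else a $K_{k+2}$ arises) are all correct, and the lemma does kill diamonds exactly as you say. The converse construction over the rooted block-cut tree is also sound: the count ``at most $k+1$ blocks through a vertex versus $k+1$ available $k$-subsets, one consumed by the parent block'' is exactly right, and the faithfulness check you flag as delicate does go through by the block-by-block induction you outline (a new clique over $S_B$ meets an old clique $P$ in a subset of $S_B\cap P\subseteq\mbox{realization}(Q_0)\cap P$, which has fewer than $k$ vertices whether or not $P$ is adjacent to the entry vertex $Q_0$, because $S_B$ differs from every $k$-subset previously used at $Q_0$).

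The genuine gap is the chordality of $H$ in the forward direction. You reduce ``block graph'' to ``chordal and diamond-free,'' prove diamond-freeness, and then dismiss chordality with the parenthetical that an induced cycle of length at least $4$ ``yields, through consecutive triangles, coincident separators and hence a chord.'' But an induced cycle of length at least $4$ contains no triangles at all, so your Helly lemma---whose hypothesis is a triangle of $H$---gives no information about it; the argument is vacuous exactly where it is needed. Nor can chordality follow formally from diamond-freeness: $C_5$ is diamond-free and satisfies the Helly condition vacuously, yet is not a block graph, so some input beyond your lemma is required. Two standard repairs exist. One is induction on the $k$-tree construction: when a simplicial vertex $v$ with neighborhood $C$ is added, the new maximal clique $C\cup\{v\}$ is adjacent in $H$ precisely to the cliques containing $C$, which pairwise meet in $C$ and hence form a clique of $H$; thus $H$ grows by simplicial vertex additions and is chordal (indeed this also re-derives the block structure). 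Alternatively, use that an edge $\{Q_1,Q_2\}$ of $H$ is a separating pair of $G$: an induced cycle $Q_1Q_2\cdots Q_t$, $t\geq 4$, would give a walk in $G$ from $Q_2\setminus S_1$ to $Q_1\setminus S_1$ avoiding $S_1=Q_1\cap Q_2$ (consecutive cliques of the cycle overlap outside $S_1$ once one rules out $S_i=S_1$, which would itself force a chord), contradicting the separation. Either patch closes the hole; as written, the forward direction does not establish the block-graph property.
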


In the next theorem we characterize $k$-trees in terms of its reduced clique graph and its $(k+1)$-line graph.

\begin{theorem}\label{theo:carac_cr_klinha}
Let $G$ be a connected chordal graph.
Graph $G$ is a $k$-tree if and only if $C_r(G)$ and $\ell_{k+1}(G)$ are the same.
\end{theorem}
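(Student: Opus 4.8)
The plan is to prove both implications by treating the vertex sets and the edge sets of the two graphs separately, converting everything into the clique-size conditions of Theorem \ref{theo:cardinalidadeSeparadoresKarvores}. Recall that $C_r(G)$ and $\ell_{k+1}(G)$ are the same graph precisely when they have identical vertex sets and identical edge sets: the vertices of $C_r(G)$ are the maximal cliques of $G$ while those of $\ell_{k+1}(G)$ are the $(k+1)$-cliques of $G$, and the two edge relations are ``forming a separating pair'' and ``intersecting in exactly $k$ vertices.''

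First I would settle the vertex sets. The key observation is that the set of maximal cliques of $G$ coincides with the set of $(k+1)$-cliques of $G$ if and only if every maximal clique has cardinality $k+1$: if all maximal cliques have size $k+1$ then no clique can have size $k+2$, so every $(k+1)$-clique is maximal, and conversely a $(k+1)$-clique that is maximal forces the sizes to match. In the forward direction (assuming $G$ is a $k$-tree) this equality of vertex sets is immediate from Theorem \ref{theo:cardinalidadeSeparadoresKarvores}; in the backward direction (assuming the two graphs are identical) the equality of vertex sets hands me the condition $|Q| = k+1$ for all $Q \in \mathbb{Q}$ needed later.

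Next I would match the edges. One inclusion is straightforward in both directions: if $Q, Q'$ form a separating pair, then by Theorem \ref{theo:habib1} the set $S = Q \cap Q'$ is a minimal vertex separator, so in a $k$-tree $|S| = k$ and $\{Q,Q'\}$ is an edge of $\ell_{k+1}(G)$. Running this backward, under the hypothesis that the two graphs agree, every minimal vertex separator $S$ equals $Q \cap Q'$ for a separating pair (Theorem \ref{theo:habib1}); this pair is then an edge of $\ell_{k+1}(G)$, forcing $|S| = k$. Together with the vertex-set information this yields both clique-size conditions and hence, by Theorem \ref{theo:cardinalidadeSeparadoresKarvores}, that $G$ is a $k$-tree.

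The main obstacle is the remaining edge inclusion in the forward direction: showing that in a $k$-tree any two maximal cliques $Q = S \cup \{a\}$ and $Q' = S \cup \{b\}$ with $|Q \cap Q'| = |S| = k$ actually form a separating pair. Here $a$ and $b$ are non-adjacent (otherwise $S \cup \{a,b\}$ would be a $(k+2)$-clique), so it suffices to prove that $S$ separates $a$ from $b$. I would argue this with a clique tree $T$ of $G$: the maximal cliques containing a fixed vertex induce a subtree of $T$, and since $a,b$ are non-adjacent their subtrees $T_a$ and $T_b$ are disjoint, while every clique on the $T$-path between $Q \in T_a$ and $Q' \in T_b$ must contain $S$. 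Consequently each edge on this path has separator exactly $S$, and the standard clique-tree separator property shows that deleting $S$ places $a$ and $b$ in different components. This establishes that $\{Q,Q'\}$ is a separating pair, completing the agreement of the edge sets and thus the theorem.
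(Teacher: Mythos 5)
Your proposal is correct, and while its skeleton (match vertex sets, then match edge sets) is the same as the paper's, it differs in two substantive ways. First, your converse is direct: from the assumed equality of the two graphs you extract $|Q|=k+1$ for all maximal cliques and, via Theorem \ref{theo:habib1}, $|S|=k$ for all minimal vertex separators, then invoke Theorem \ref{theo:cardinalidadeSeparadoresKarvores}; the paper instead argues the contrapositive through Theorem \ref{theo:Rose}, splitting into a $(k+2)$-clique case and a bad-separator case. These are logically equivalent in content, and yours is arguably cleaner since it avoids the slightly awkward ``suppose $Q$ is maximal'' step in the paper's Case A. Second, and more importantly, you identify and actually prove the crux of the forward direction: that in a $k$-tree any two maximal cliques with $|Q\cap Q'|=k$ form a separating pair. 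The paper simply asserts this sentence with no justification, whereas you derive it from the induced-subtree property of clique trees (the subtrees $T_a$, $T_b$ of the two private vertices are disjoint because $a,b$ are non-adjacent, every clique on the $T$-path between $Q$ and $Q'$ contains $S$, and the clique-tree edge-separator property then puts $a$ and $b$ in different components of $G-S$). So your write-up fills a genuine gap in the published argument, at the cost of invoking more clique-tree machinery; one small point worth making explicit is that ``each edge on this path has separator exactly $S$'' uses that all cliques on the path have cardinality $k+1$ and contain the $k$-set $S$ (and in fact only the single edge where the path leaves $T_a$ is needed for the separation argument).
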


\begin{proof}
By Theorem \ref{theo:Rose}, all  maximal cliques of $G$ have cardinality $k+1$;
by definition, these cliques are the vertices of $C_r(G)$.
There are no cliques of cardinality $(k+2)$ in $G$ (still by Theorem \ref{theo:Rose});
so these maximal  cliques are also the vertices of $\ell_{k+1}(G)$.
The edges of  $C_r(G)$ correspond to minimal vertex separators of $G$ (Theorem \ref{theo:cliqueTree_spanninTree}).
Since $G$ is a $k$-tree, its minimal vertex separators are cliques of cardinality $k$.
All pairs of maximal cliques $Q$ and $Q'$ such that $|Q \cap Q'| = k$ form a separating pair of $G$
and the edge $\{Q,Q'\}$ belongs to $C_r(G)$.
These edges are precisely the edges of $\ell_{k+1}(G)$.


Reciprocally, suppose that $G$ is not a $k$-tree.
Then, by Theorem \ref{theo:Rose}, two cases can occur.

\textbf{Case A:} $G$ has at least one clique $Q$ of cardinality $k+2$.
Suppose that $Q$ is a maximal clique.
In this case, all $k+2$ subcliques of $Q$ with cardinality $k+1$
belong to the set of vertices of  $\ell_{k+1}(G)$ with their intersections of cardinality $k$.
However, as these cliques are not maximal cliques they do not appear in $C_r(G)$.
So $C_r(G)$ is not equal to  $\ell_{k+1}(G)$.
Contradiction.

\textbf{Case B:} there is a $mvs$ in $G$ with cardinality not equal to $k$.
In this case there is an edge belonging to  $C_r(G)$ that does not appear in $\ell_{k+1}(G)$.
Contradiction.\qed
\end{proof}

Notice that, by Theorem \ref{teo:grafo(k+1)linhaDeKarvore}, 
the reduced clique graph of a $k$-tree is a block graph, a fact that is not true for chordal graphs in general as we can see in Figure \ref{fig:exem_CrG_e_spantree}.

\begin{theorem}\label{theo:relation-ktree-block}
Let $G$ be a $k$-tree, $\mathbb S$ its set of minimal vertex separators and  $C_r(G)$ its reduced clique graph.
Then each maximal clique $Q$ of $C_r(G)$ corresponds to a minimal vertex separator of $G$,
and more, the multiplicity of the mvs $S \in \mathbb S$ equals $|Q| -1$.
\end{theorem}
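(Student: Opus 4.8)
The plan is to exploit the identification $C_r(G)=\ell_{k+1}(G)$ from Theorem \ref{theo:carac_cr_klinha}, so that two maximal cliques $Q,Q'$ of $G$ are adjacent in $C_r(G)$ exactly when $|Q\cap Q'|=k$; recall also that every maximal clique of $G$ has cardinality $k+1$, every mvs has cardinality $k$, and $G$ contains no $(k+2)$-clique (Theorems \ref{theo:Rose} and \ref{theo:cardinalidadeSeparadoresKarvores}). For a mvs $S\in\mathbb S$ I would write $\mathcal Q_S$ for the set of maximal cliques of $G$ that contain $S$; by Theorem \ref{theo:habib1} this set has at least two elements. The goal is to prove that the maximal cliques of the block graph $C_r(G)$ are exactly the sets $\mathcal Q_S$, and then to count.

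The technical heart, and the step I expect to be the main obstacle, is a restricted Helly property: if three maximal cliques $Q_1,Q_2,Q_3$ pairwise form separating pairs, then $Q_1\cap Q_2=Q_1\cap Q_3=Q_2\cap Q_3$. I would prove this by writing $S_{12}=Q_1\cap Q_2$ and $S_{13}=Q_1\cap Q_3$ as the two $k$-subsets $Q_1\setminus\{a\}$ and $Q_1\setminus\{b\}$ of the $(k+1)$-set $Q_1$. If $a=b$ the three intersections coincide and we are done; if $a\neq b$, writing $Q_2=S_{12}\cup\{v_2\}$ and $Q_3=S_{13}\cup\{v_3\}$ with $v_2,v_3\notin Q_1$, a short computation of $Q_2\cap Q_3$ splits into two impossible outcomes: if $v_2\neq v_3$ the intersection has only $k-1$ vertices, contradicting that it is a mvs, and if $v_2=v_3$ then $Q_1\cup\{v_2\}$ is a $(k+2)$-clique, contradicting the $k$-tree hypothesis. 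This is exactly where the absence of a $(k+2)$-clique is essential, and it explains why the statement holds for $k$-trees but not for chordal graphs in general. A straightforward induction, applying the three-clique fact to the triples $\{Q_1,Q_2,Q_j\}$ and $\{Q_1,Q_i,Q_j\}$, then upgrades this to the statement that every clique of $C_r(G)$ has a common intersection $S$, a $k$-set, equal to all of its pairwise intersections.

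With the Helly property in hand I would establish the correspondence. First, $\mathcal Q_S$ is a clique of $C_r(G)$, since any two of its members contain $S$ and, having cardinality $k+1$, must meet in exactly $S$. It is moreover \emph{maximal}: if some clique $Q'$ were adjacent to all of $\mathcal Q_S$ but did not contain $S$, then applying the three-clique fact to $Q',Q_1,Q_2$ for two fixed members $Q_1,Q_2\in\mathcal Q_S$ forces $Q_1\cap Q'=Q_1\cap Q_2=S$, so $S\subseteq Q'$, a contradiction. Conversely, any maximal clique $Q$ of $C_r(G)$ has, by the previous paragraph, a common intersection $S$ which is a mvs (Theorem \ref{theo:habib1}); since every member of $Q$ contains $S$ we get $Q\subseteq\mathcal Q_S$, and maximality gives $Q=\mathcal Q_S$. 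The assignment $Q\mapsto S$ is well defined because $S=\bigcap_{Q_i\in\mathcal Q_S}Q_i$ is recovered from the clique itself, which proves the first assertion.

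Finally I would compute the multiplicity. Fix a clique tree $T$ of $G$. By the induced-subtree property of clique trees, the maximal cliques containing each single vertex of $S$ induce a subtree of $T$, and intersecting these $k$ subtrees shows that $\mathcal Q_S$ induces a connected subtree $T_S$ of $T$, hence one with $|\mathcal Q_S|-1$ edges. Every edge of $T_S$ joins two cliques containing $S$ and so carries the label $S$; conversely any edge of $T$ labelled $S$ joins two cliques containing $S$, i.e. two vertices of $T_S$, and therefore already lies in $T_S$. Thus the number of edges of $T$ labelled $S$ is exactly $|\mathcal Q_S|-1$, which by definition of multiplicity gives $\mu(S)=|Q|-1$ for the corresponding maximal clique $Q=\mathcal Q_S$ of $C_r(G)$, completing the proof.
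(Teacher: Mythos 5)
Your proof is correct, but it takes a genuinely different route from the paper's, which is far terser. The paper observes (just before the theorem) that $C_r(G)$ is a block graph by Theorem \ref{teo:grafo(k+1)linhaDeKarvore}, \emph{asserts} that each block consists exactly of the maximal cliques of $G$ sharing one mvs $S$, notes that consequently every edge of that block is labelled $S$ and no edge outside it is, and concludes $\mu(S)=|Q|-1$ because any clique tree, being a spanning tree of $C_r(G)$, induces a spanning tree on each block. You instead \emph{prove} the clique--separator correspondence rather than assert it: your three-clique Helly lemma (pairwise adjacent maximal cliques $Q_1,Q_2,Q_3$ must share a common $k$-set, since otherwise either some pairwise intersection drops to cardinality $k-1$ or a $(k+2)$-clique appears) is precisely the content hidden in the paper's assertion, and it makes explicit where the $k$-tree hypothesis enters --- the paper's proof never points to the no-$(k+2)$-clique condition at all. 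For the multiplicity count you also substitute the induced-subtree (clique-intersection) property of clique trees for the paper's block/spanning-tree argument; both are valid, yours needing no appeal to the block structure of $C_r(G)$, the paper's needing no appeal to the subtree property. The net effect is that your write-up is self-contained modulo Theorems \ref{theo:carac_cr_klinha} and \ref{theo:habib1} and standard clique-tree facts, whereas the paper's is shorter but leaves its key structural claim unjustified; your argument could serve as the missing justification. One cosmetic caveat, shared with the paper: if $G$ is complete then $C_r(G)$ is a single vertex, which is a maximal clique corresponding to no mvs, so the statement implicitly assumes $\mathbb{S}\neq\emptyset$.
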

  \begin{proof}
Let $G$ be a $k$-tree. 
Each maximal clique $Q$ of $C_r(G)$ is a block  whose vertices represent
 the maximal cliques of $G$ that share the same \emph{msv} $S$. 
Then, no other edge of $C_r(G)$ corresponds to $S$, since no other maximal clique of $G$ contains $S$.
As each edge of the block $Q$ is associated to $S$, it will appear $|Q|-1$ times in any clique tree.
Hence, $\mu (S)=|Q|-1$.\qed
\end{proof} 

\section{Number of clique trees of a \emph{k}-tree}

In 1989, Ho and Lee \cite{ho1989counting} presented a formula for counting the number of clique trees of a chordal graph.
Kumar and Madhavan \cite{kumar2002clique} modify this formula, focusing in minimal vertex separators and
stating that  the complexity time of the process is $(|{\mathbb S}| (\omega(G)|V| +|E| ))$, 
being $\omega(G)$ the clique number of $G$.
We resume the formula presented in \cite{ho1989counting}, showing that for $k$-trees it can be simplified
and its time complexity reduced.
Previously, some definitions are needed.

Let $G=(V,E)$ be a chordal graph, $\mathbb{Q}$ the set of its maximal cliques and $\mathbb{S}$ the set of its minimal vertex separators. For a set $A \subset V$, $Adj(A) = \cup_{v\in A} N(v) \setminus A$.
For every minimal vertex separator $S \in {\mathbb S}$,\\
${\mathfrak C}_S = \{ C \,|\, C \mbox{ is a connected component} \mbox{of } G\setminus S \mbox{ and } Adj(C) = S\}$\\
and for every $C \in {\mathfrak C}_S$, \\ ${\mathbb Q}_C = \{Q \in {\mathbb Q} \,|\, Q \mbox{ is in } G[C\cup S] 
\mbox{ and }S\subset Q\}.$

\begin{theorem}{\rm\cite{ho1989counting}}
The number of clique trees of a connected chordal graph $G$ is equal to
\begin{equation}\label{eq:formulacliquetreeHoLee}
\prod_{S \in {\mathbb S}} \left[ \left( \sum_{C \in {\mathfrak C}_S} |{\mathbb Q}_C|\right )^ {|{\mathfrak C}_S| -2} 
\cdot  \prod_{C \in {\mathfrak C}_S}  |{\mathbb Q}_C|  \right],
\end{equation}
where $\mathbb S$ is the set of minimal vertex separators of $G$.
\end{theorem}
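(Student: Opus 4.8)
The plan is to reduce the counting of clique trees to counting maximum-weight spanning trees of the reduced clique graph $C_r(G)$, and then to exploit the product structure that maximum-weight spanning trees have with respect to edge weights. By Theorem \ref{theo:cliqueTree_spanninTree}, the clique trees of $G$ are exactly the maximum-weight spanning trees of $C_r(G)$, where the weight of $\{Q,Q'\}$ is $|Q\cap Q'|$. The first step is to invoke the classical fact that the number of maximum-weight spanning trees of a weighted graph factors over its weight levels: running a Kruskal-type greedy process in order of decreasing weight, after all edges of weight strictly greater than $w$ have been processed the vertices are partitioned into clusters, and the edges of weight exactly $w$ must merge these clusters into the coarser clusters determined by the edges of weight $\geq w$; the total number of maximum-weight spanning trees is then the product, over all weight values $w$, of the number of admissible merging patterns at level $w$.

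Second, I would identify these clusters and the local structure at each minimal vertex separator. Since $S$ is a clique, the maximal cliques containing $S$ form a subtree of any clique tree, and every edge inside this subtree carries a label $Q\cap Q'\supseteq S$; those labelled exactly $S$ join cliques lying in distinct full components of $G\setminus S$. The aim is to show that, at weight level $w=|S|$, the clusters that the $S$-labelled edges are allowed to merge are precisely the sets $\mathbb{Q}_C$, $C\in\mathfrak{C}_S$, each regarded as a super-node whose admissible endpoints (``ports'') are its $|\mathbb{Q}_C|$ cliques containing $S$. A further point is that two distinct separators of the same cardinality act on disjoint pairs of clusters, so the product over weight levels refines to a product over the individual separators $S\in\mathbb S$, matching the outer product of (\ref{eq:formulacliquetreeHoLee}).

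Third, I would compute the local contribution of a fixed $S$. With $r=|\mathfrak{C}_S|$ super-nodes and $x_C=|\mathbb{Q}_C|$ available ports in component $C$, an admissible merging pattern is a tree on the $r$ super-nodes in which each edge selects one port in each of its two endpoints; the multiplicity $\mu(S)=r-1$ is exactly the number of such edges. By the weighted matrix-tree theorem the number of these patterns is the weighted tree sum over $K_r$ with edge weights $x_ix_j$, and the generalized Cayley identity
\[
\sum_{\tau}\ \prod_{\{i,j\}\in\tau} x_i x_j \;=\; \Bigl(\sum_{i} x_i\Bigr)^{\,r-2}\ \prod_{i} x_i
\]
(the sum ranging over all trees $\tau$ on $r$ labelled nodes) evaluates this to $\bigl(\sum_{C}|\mathbb{Q}_C|\bigr)^{|\mathfrak{C}_S|-2}\prod_{C}|\mathbb{Q}_C|$. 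Taking the product over all $S\in\mathbb S$ then yields (\ref{eq:formulacliquetreeHoLee}).

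The hard part will be the second step: rigorously justifying that the greedy/contraction process decouples cleanly into independent local problems, one per separator. Concretely, one must prove that the clusters present when the $S$-edges are processed are precisely the $\mathbb{Q}_C$, and that no interaction occurs between separators of equal cardinality or between a separator $S$ and the strictly larger separators nested above it. I expect this to require a careful induction on the number of maximal cliques (or on decreasing separator cardinality), using that the full components $\mathfrak{C}_S$ and the within-component connections governed by larger separators are exactly the objects already assembled at higher weight levels; the weighted Cayley identity itself is then routine.
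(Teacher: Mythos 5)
The paper does not actually prove this statement: it is imported verbatim from Ho and Lee \cite{ho1989counting}, so your attempt can only be measured against that original proof, which proceeds differently --- Ho and Lee analyze clique trees directly, showing that a clique tree is assembled by choosing, separator by separator, a tree joining the full components, and then count the choices. Your route is genuinely different: identify clique trees with the maximum-weight spanning trees of $C_r(G)$ (Theorem \ref{theo:cliqueTree_spanninTree}), factor the count of maximum-weight spanning trees over weight levels by the standard Kruskal/matroid argument, and evaluate each separator's local contribution by the weighted Cayley identity. The local computation is correct: for a separator $S$ with $|S|=w$, the weight-$w$ edges of $C_r(G)$ attached to $S$ are exactly the pairs $(Q,Q')$ with $Q\in{\mathbb Q}_C$, $Q'\in{\mathbb Q}_{C'}$, $C\neq C'$ in ${\mathfrak C}_S$ (each such pair is a separating pair, and conversely every weight-$w$ edge arises this way by Theorem \ref{theo:habib1}), so the local multigraph has $|{\mathbb Q}_C|\,|{\mathbb Q}_{C'}|$ parallel edges between super-nodes and the identity yields precisely the bracketed factor of (\ref{eq:formulacliquetreeHoLee}). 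What your approach buys is that all graph-theoretic work is pushed into $C_r(G)$ and classical spanning-tree machinery; what it costs is the decoupling lemma below, which Ho and Lee's direct construction handles as part of their generation argument.

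That decoupling step is where your write-up is not just unfinished but, as stated, wrong. You claim that ``two distinct separators of the same cardinality act on disjoint pairs of clusters.'' Disjointness of the cluster \emph{sets} fails: in the path with vertices $a,b,c,d$, the separators $\{b\}$ and $\{c\}$ share the cluster consisting of the single maximal clique $\{b,c\}$ (also, in general a cluster properly contains ${\mathbb Q}_C$, since it absorbs cliques inside $C\cup S$ not containing $S$). Two facts are actually needed. (i) No pair of clusters carries weight-$w$ edges from two different separators: if $S\neq S'$ both had edges between clusters $K_1\neq K_2$, then every clique of $K_i$ lies in $D_i\cup S'$ for distinct full components $D_1\neq D_2$ of $S'$; picking in each $K_i$ a clique containing $S$ (such cliques exist because of the $S$-edges) gives $S\subseteq(D_1\cup S')\cap(D_2\cup S')=S'$, hence $S=S'$ since $|S|=|S'|$, a contradiction. (ii) Fact (i) alone is still insufficient: a cycle in the level-$w$ cluster multigraph alternating between the structures of several separators would turn the spanning-tree count into a sum rather than a product. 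You must show the per-separator multigraphs sit block-like, i.e.\ every cycle lies inside a single separator's multigraph. The cheapest argument: fix any one clique tree $T$; the cliques containing $S$ form a subtree of $T$ in which each ${\mathbb Q}_C$ induces a subtree whose internal edges have weight $>w$ (an edge of $C_r(G)$ between two cliques of the same ${\mathbb Q}_C$ cannot have label $S$, since $S$ does not separate them), so the $S$-labelled edges of $T$ contract to a spanning tree on the clusters of $S$; acyclicity of $T$ then forbids any cycle mixing separators, and the standard factorization of spanning trees over blocks gives $\tau(X)=\prod_S\tau(M_S)$ on each component $X$ of the level graph. With (i) and (ii) supplied your proof closes; without them, the central product over ${\mathbb S}$ is unjustified.
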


Actually, the determination of the number of clique trees of $k$-trees depends only on the multiplicity of each $mvs$,
as seen in Theorem  \ref{theo:nro-clique-tree}.

\begin{theorem}\label{theo:nro-clique-tree}
The number of clique trees in a $k$-tree is equal to
\begin{equation}\label{eq:nossaformulacliquetree}
\prod_{S \in {\mathbb S}} \left(\mu(S) +1 \right)^{\mu(S)-1},
\end{equation}
where $\mathbb S$ is the set of minimal vertex separators of $G$.
\end{theorem}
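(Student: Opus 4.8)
The plan is to specialize the Ho--Lee formula (\ref{eq:formulacliquetreeHoLee}) to $k$-trees, for which the quantities appearing in each factor become rigid. For every minimal vertex separator $S \in \mathbb{S}$ I would establish two facts: (a)~every full component $C \in {\mathfrak C}_S$ satisfies $|{\mathbb Q}_C| = 1$, and (b)~$|{\mathfrak C}_S| = \mu(S)+1$. Granting these, each inner bracket of (\ref{eq:formulacliquetreeHoLee}) collapses: the product $\prod_{C \in {\mathfrak C}_S} |{\mathbb Q}_C|$ equals $1$, the sum $\sum_{C \in {\mathfrak C}_S} |{\mathbb Q}_C|$ equals $|{\mathfrak C}_S| = \mu(S)+1$, and the exponent $|{\mathfrak C}_S|-2$ equals $\mu(S)-1$. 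Hence the factor attached to $S$ is $(\mu(S)+1)^{\mu(S)-1}$, and multiplying over $S \in \mathbb{S}$ gives exactly (\ref{eq:nossaformulacliquetree}).

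To prove~(a) I would first invoke Theorem~\ref{theo:cardinalidadeSeparadoresKarvores}: since $|S|=k$ and every maximal clique has cardinality $k+1$, each maximal clique of $G$ containing $S$ has the form $S \cup \{v\}$ with $v$ adjacent to all of $S$. The inequality $|{\mathbb Q}_C| \le 1$ then follows from the separating-pair structure. Given two such cliques $S \cup \{v_1\}$ and $S \cup \{v_2\}$, their intersection is exactly $S$, of cardinality $k$, so they are adjacent in $\ell_{k+1}(G)$; by Theorem~\ref{theo:carac_cr_klinha} this graph equals $C_r(G)$, so $S \cup \{v_1\}$ and $S \cup \{v_2\}$ form a separating pair and every $v_1$--$v_2$ path meets $S$. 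Thus $v_1$ and $v_2$ lie in distinct components of $G \setminus S$, and a single component carries at most one maximal clique through $S$. The reverse inequality $|{\mathbb Q}_C| \ge 1$ for a full component is the standard chordal-graph fact that a full component of a minimal separator always contains a vertex adjacent to the whole separator; in a $k$-tree such a vertex $v$ produces the maximal clique $S \cup \{v\} \subseteq C \cup S$.

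For~(b) I would note that each component $C$ hosting a clique $S \cup \{v\}$ is indeed full: $v \in C$ adjacent to all of $S$ gives $S \subseteq Adj(C)$, while $Adj(C) \subseteq S$ because $C$ is a connected component of $G \setminus S$; hence $Adj(C) = S$. Combined with~(a), the full components are therefore in bijection with the maximal cliques of $G$ containing $S$. By Theorem~\ref{theo:relation-ktree-block}, these maximal cliques are exactly the vertices of the maximal clique of $C_r(G)$ associated with $S$, whose cardinality is $\mu(S)+1$. This yields $|{\mathfrak C}_S| = \mu(S)+1$ and supplies the last ingredient for the collapse above.

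I expect the real obstacle to be the lower bound in~(a): ruling out a full component $C$ with $|{\mathbb Q}_C|=0$. Such a component would force $\prod_{C \in {\mathfrak C}_S} |{\mathbb Q}_C| = 0$ and make the whole count vanish, which is impossible since a connected chordal graph has at least one clique tree; so this step cannot be skipped and must be derived from chordality rather than assumed. I would settle it through a perfect-elimination or clique-tree argument on $G[C \cup S]$, which is precisely where the chordal structure of the $k$-tree is used; everything else reduces to the arithmetic simplification described in the first paragraph.
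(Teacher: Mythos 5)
Your proposal is correct and follows essentially the same route as the paper: both specialize the Ho--Lee formula (\ref{eq:formulacliquetreeHoLee}) by showing that $|{\mathbb Q}_C|=1$ for every $C \in {\mathfrak C}_S$ and that $|{\mathfrak C}_S| = \mu(S)+1$, the latter via Theorem~\ref{theo:relation-ktree-block}, so each factor collapses to $(\mu(S)+1)^{\mu(S)-1}$. If anything, your write-up is more careful than the paper's, which establishes these two facts rather informally by reasoning about removing block edges in $C_r(G)$, whereas you pin them down through separating pairs and the standard full-component property of minimal separators in chordal graphs.
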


\begin{proof}
Let $G$ be a $k$-tree. 
We are going to apply to $G$ equation (\ref{eq:formulacliquetreeHoLee}). 
It is known that $|S|=k$ for all $S\in {\mathbb S}$ and $|Q|=k+1$ for all 
$Q\in {\mathbb Q}$ (Theorem \ref{theo:cardinalidadeSeparadoresKarvores}). 
Let $S \in {\mathbb S}$ and $C \in {\mathfrak C}_S$.  

Let us observe $C_r(G)$.
The removal of a $mvs$ $S$ corresponds, by Theorem \ref{theo:relation-ktree-block}, 
to the removal of the edges of a maximal clique (a block) of $C_r(G)$.
The vertices that are endpoints of these edges correspond to  maximal cliques of $G$ containing $S$. We can consider two types of remaining vertices in $C_r(G)$:  isolated vertices or vertices that were separators in the original $C_r(G)$.
Either one corresponds to a component in $G[V\setminus S]$ and these vertices are the only ones in each component that contains $S$.
Therefore $|{\mathbb Q}_C|=1$ and the second factor of equation (1) is equal to 1.

The first factor of equation (1) is now 
$\left( \sum_{C \in {\mathfrak C}_S} 1\right )^ {|{\mathfrak C}_S| -2}$.
To consider all $C \in {\mathfrak C}_S$ is equivalent to consider  the number of components of $G[V\setminus S]$.
It was seen that, in $C_r(G)$,  each vertice containing $S$ 
establish a component. 
These vertices form a block
(that is, a maximal clique of $C_r(G)$)
and, by Theorem \ref{theo:relation-ktree-block}, we know that $|Q| -1 =\mu(S)$.
So, the factor becomes $\left( \sum_{\mu(S)+1} 1\right )^ {\mu(S)-1} $.
\qed
\end{proof}

\begin{theorem}
The determination of the number of clique trees of a $k$-tree can be performed in linear time complexity.
\end{theorem}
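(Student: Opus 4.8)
The plan is to show that every quantity appearing in the simplified formula of Theorem~\ref{theo:nro-clique-tree}, namely the set $\mathbb{S}$ of minimal vertex separators together with the multiplicity $\mu(S)$ of each $S\in\mathbb{S}$, can be computed in linear time, and that the subsequent arithmetic evaluation of the product $\prod_{S\in\mathbb{S}}(\mu(S)+1)^{\mu(S)-1}$ also respects a linear bound. First I would invoke the result already cited in the excerpt (Markenzon \textit{et al.}~\cite{markenzon2010one}) that the minimal vertex separators of a chordal graph and their multiplicities can be determined in linear time. Since $G$ is a $k$-tree, its structure is even more constrained: every maximal clique has cardinality $k+1$ and every minimal vertex separator has cardinality $k$ (Theorem~\ref{theo:cardinalidadeSeparadoresKarvores}), so the input size is governed by $n$, $m$, and $k$ in a controlled way, with $m = \Theta(kn)$.

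The key steps, in order, are as follows. First, build a clique tree of $G$ in linear time, for instance from a perfect elimination ordering produced by a Lex-BFS or MCS sweep; this is standard for chordal graphs and yields the set $\mathbb{Q}$ of maximal cliques. Second, traverse the clique tree, reading off each edge intersection $Q\cap Q'$ as a minimal vertex separator and accumulating the multiplicity $\mu(S)$ as the number of clique-tree edges whose intersection equals $S$; by the results recalled before Theorem~\ref{theo:habib1}, the multiset $\mathbb{M}$ of separators is independent of the chosen clique tree, so this correctly computes every $\mu(S)$. Third, for each distinct $S\in\mathbb{S}$, evaluate the local factor $(\mu(S)+1)^{\mu(S)-1}$ and multiply these together to obtain the total.

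I would argue the linear bound by observing that a $k$-tree on $n$ vertices has exactly $n-k$ maximal cliques and $n-k-1$ clique-tree edges, so the number of separator occurrences counted is $O(n)$; since comparing or hashing a separator $S$ of fixed size $k$ costs $O(k)$, the grouping of occurrences into distinct separators with their multiplicities costs $O(kn) = O(m)$. The arithmetic step is the point deserving the most care: each exponentiation $(\mu(S)+1)^{\mu(S)-1}$ can be performed by repeated multiplication (or fast exponentiation) in a number of operations proportional to $\mu(S)$, and since $\sum_{S\in\mathbb{S}}\mu(S) = |\mathbb{M}| = n-k-1$ is the total number of clique-tree edges, the aggregate cost of all exponentiations is again $O(n)$. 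Hence the whole computation runs in linear time.

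The main obstacle I anticipate is making the notion of \emph{linear time} precise in the presence of potentially large integers: the number of clique trees can be exponential in $n$, so the output does not fit in a single machine word and the product cannot literally be formed in $O(n)$ word operations. The honest resolution is to adopt the standard arithmetic model in which each elementary arithmetic operation is charged unit cost (as is implicit in the cited complexity bound of Kumar and Madhavan~\cite{kumar2002clique}), counting the number of multiplications rather than bit operations; under this convention the $O(n)$ bound follows cleanly from the step count above, and I would state this model explicitly to avoid ambiguity.
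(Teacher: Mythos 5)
Your proposal is correct and follows essentially the same route as the paper: invoke the linear-time determination of the minimal vertex separators and their multiplicities from \cite{markenzon2010one}, then bound the total number of multiplications in evaluating $\prod_{S\in{\mathbb S}}\left(\mu(S)+1\right)^{\mu(S)-1}$ by $\sum_{S\in{\mathbb S}}\mu(S)=n-k-1<n$, using the fact that a $k$-tree has $n-k$ maximal cliques. Your explicit remark about adopting the unit-cost arithmetic model (since the number of clique trees can be exponential in $n$) is a clarification the paper leaves implicit, and it strengthens rather than changes the argument.
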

\begin{proof}
The determination of the minimal vertex separators and their multiplicities 
can be performed in $O(m)$ \cite{markenzon2010one}.
The number of maximal cliques of a $k$-tree $G$ with $n$ vertices is $n-k$.
Then any clique tree of $G$ has $n-k-1$ edges, 
each one of them corresponding to a minimal vertex separator of $\mathbb M$.
So, $\sum_{S \in \mathbb S} \mu(S) = n-k-1$.

Each factor of equation (\ref{eq:nossaformulacliquetree}) is $ \left(\mu(S) +1 \right)^{\mu(S)-1} $.
So, it corresponds to a product of $\mu(S_i)-1$ times the value $\mu(S_i) +1$;
each factor has $\mu(S_i)- 1$ factors.
Hence, the total number of factors is less than $\sum_{S\in{\mathbb S}} \mu(S)$, that, as it was seen,
is less than $n$.
Hence, the computation of equation (\ref{eq:nossaformulacliquetree}) has linear time complexity.
\hfill$\Box$
\end{proof}

\section{Number of  spanning trees of a block graph}

A \emph{spanning tree} of a graph $G$ is a tree containing all vertices of $G$. 
The number of spanning trees of $G$, $\tau(G)$, is a well known invariant of the literature, 
provided by the Kirchhoff's matrix tree theorem \cite{biggs1993algebraic}.
This is a very interesting result, since it combines structural and spectral aspects of the graph.
In this section, we present a new computation for the number of spanning trees of block graphs, 
showing that our result has a better complexity time than the general case.

\begin{theorem}\label{theo:equality}
If $G$ is a $k$-tree, then the number of clique trees of $G$ equals the number of spanning trees of $C_r(G)$.
\end{theorem}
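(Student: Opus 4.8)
The plan is to exploit the fact that, for a $k$-tree, all edges of the reduced clique graph carry the same weight, so that the maximum-weight requirement in Theorem \ref{theo:cliqueTree_spanninTree} becomes vacuous. First I would show that every edge $\{Q,Q'\}$ of $C_r(G)$ has weight exactly $k$. Indeed, each such edge arises from a separating pair, so by Theorem \ref{theo:habib1} the intersection $Q\cap Q'$ is a minimal vertex separator of $G$; since $G$ is a $k$-tree, Theorem \ref{theo:cardinalidadeSeparadoresKarvores} guarantees $|Q\cap Q'|=k$. Hence the weight function on $C_r(G)$ is constant, equal to $k$ on every edge.

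Next I would observe that any spanning tree of $C_r(G)$ has exactly $|\mathbb{Q}|-1$ edges, where $|\mathbb{Q}|$ is the number of vertices of $C_r(G)$ (the maximal cliques of $G$). Its total weight is therefore $k(|\mathbb{Q}|-1)$, a quantity independent of the particular spanning tree chosen. Consequently every spanning tree of $C_r(G)$ is simultaneously a maximum-weight spanning tree.

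Finally I would invoke Theorem \ref{theo:cliqueTree_spanninTree}, according to which the clique trees of $G$ are precisely the maximum-weight spanning trees of $C_r(G)$. Combined with the previous paragraph, the clique trees of $G$ coincide with the entire set of spanning trees of $C_r(G)$, and so the two counts agree.

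The main obstacle is conceptual rather than computational: one must recognize that the $k$-tree hypothesis forces uniform edge weights, which collapses the notion of a \emph{maximum}-weight spanning tree into that of an \emph{arbitrary} spanning tree. Once this uniformity is secured through the separator-cardinality characterization, the identification of clique trees with all spanning trees of $C_r(G)$ is immediate from Theorem \ref{theo:cliqueTree_spanninTree}, and no delicate counting argument is needed.
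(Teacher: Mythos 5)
Your proposal is correct and follows essentially the same route as the paper's own proof: constant edge weight $k$ (via the separator-cardinality characterization of $k$-trees) makes every spanning tree of $C_r(G)$ maximum-weight, and then Theorem \ref{theo:cliqueTree_spanninTree} identifies clique trees with spanning trees. Your explicit appeal to Theorem \ref{theo:habib1} to justify that each edge weight is a minimal vertex separator's cardinality is a slightly more careful articulation of a step the paper leaves implicit, but the argument is the same.
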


\begin{proof}
Let $G$ be a $k$-tree. 
By Theorem \ref{theo:cardinalidadeSeparadoresKarvores}, all minimal vertex separators of a $k$-tree have cardinality $k$, 
so all edges of $C_r(G)$ have the same weight, $k$. 
Thus, all spanning trees of $C_r(G)$ have the same weight and therefore they are all maximum-weight spanning trees. 
So by Theorem \ref{theo:cliqueTree_spanninTree}, a tree $T$ is a clique tree of $G$ if and only if $T$ is a spanning tree of $C_r(G)$. 
Thus, counting the number of clique trees of $G$ is the same as counting the number of spanning trees of $C_r(G)$, which in turn, by Theorem \ref{theo:carac_cr_klinha}, is the same as counting the number of spanning trees of $\ell_{k+1}(G)$.
\qed
\end{proof}

\bigskip

Actually, Theorem \ref{theo:equality} can be applied to determine the  number of spanning trees of any block graph.

\begin{theorem}\label{theo:block-graph-k-tree}
Every connected block graph is the reduced clique graph of some $k$-tree.
\end{theorem}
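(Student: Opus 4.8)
The plan is to reduce the statement to the two characterization theorems already available, namely Theorem~\ref{teo:grafo(k+1)linhaDeKarvore} and Theorem~\ref{theo:carac_cr_klinha}. Let $B$ be a connected block graph. If I can exhibit an integer $k \ge 1$ for which $B$ is $K_{1,k+2}$-free, then Theorem~\ref{teo:grafo(k+1)linhaDeKarvore} guarantees a $k$-tree $G$ with $\ell_{k+1}(G) = B$, and since $G$ is a $k$-tree, Theorem~\ref{theo:carac_cr_klinha} yields $C_r(G) = \ell_{k+1}(G) = B$. Thus $B$ is the reduced clique graph of the $k$-tree $G$, which is exactly what the statement claims.

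The one nontrivial point is producing a suitable $k$. First I would note that the center of a $K_{1,k+2}$ must have degree at least $k+2$ in $B$. Since $B$ is finite, let $\Delta$ denote its maximum degree and set $k = \max\{1, \Delta - 1\}$; then $k + 2 > \Delta$, so $B$ has no vertex of degree $k+2$ and hence contains no $K_{1,k+2}$ at all, whether induced or not. This choice also respects the constraint $k \ge 1$ required by Theorem~\ref{teo:grafo(k+1)linhaDeKarvore}, and it simultaneously covers the degenerate cases of a single vertex or a single edge.

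I expect the main obstacle to be conceptual rather than computational: the key is to recognize that the realization follows immediately once the $K_{1,k+2}$-free condition is secured, instead of trying to build the $k$-tree $G$ by hand. A sharper reading of the block structure would replace $\Delta$ by the maximum number of blocks incident to a common vertex, since in a block graph the pairwise non-adjacent neighbors of a vertex $v$ are precisely one representative from each distinct block through $v$; this refinement identifies the least $k$ for which the argument succeeds, but the cruder maximum-degree bound already suffices to establish existence.
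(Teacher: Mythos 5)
Your proof is correct and follows essentially the same route as the paper: both obtain a $k$-tree $G$ with $\ell_{k+1}(G)=B$ from Theorem~\ref{teo:grafo(k+1)linhaDeKarvore} and then conclude $C_r(G)=\ell_{k+1}(G)=B$ via Theorem~\ref{theo:carac_cr_klinha}. The only difference is in detail: your explicit choice $k=\max\{1,\Delta-1\}$ justifies the existence of a suitable $k$ uniformly (in particular covering single vertices, single edges, and complete graphs), whereas the paper merely asserts that some $p\geq 0$ with $H$ being $K_{1,p+2}$-free exists and handles the case $p=0$ (complete graphs) separately via $k$-stars.
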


\begin{proof}
Let $H$ be a connected block graph with $n$ vertices. 
So, it exists an integer $p\geq 0$ such that $H$ is $K_{1,p+2}$-free. If $p\geq 1$, then consider $k=p$. 
By Theorem \ref{teo:grafo(k+1)linhaDeKarvore}, $H$ is the $(k+1)$-line graph of a $k$-tree, say $G$, whence, by Theorem \ref{theo:carac_cr_klinha}, $H$ is also the reduced clique graph of $G$. 
If $p=0$, then $H$ is a complete graph $K_n$ and, for all $k\geq2$, it is the $(k+1)$-line graph 
of the $k$-star (a $k$-tree with $|{\mathbb S}| \leq 1$) with $n+k$ vertices.  \qed
\end{proof}

The \textit{Laplacian matrix} of the graph $G$ is the matrix $\mathbf{L}(G)=\mathbf{D}(G)-\mathbf{A}(G)$, where $\mathbf{A}(G)$ and $\mathbf{D}(G)$ are the adjacency matrix and the diagonal matrix of the vertex degrees of $G$, respectively. The Laplacian eigenvalues of $G$, $\mu_1 \geq \mu_2 \geq ...\geq \mu_{n}$, are the eigenvalues of $\mathbf{L}(G)$. It is known that the matrix $\mathbf{L}(G)$ is positive semidefinite and symmetric. Also, $\mu_n=0$  and the multiplicity of eigenvalue zero is equal to the number of connected components of $G$.
As a consequence of Kirchhoff's matrix tree theorem, the number of spanning trees of $G$ can be expressed by its Laplacian eigenvalues, as seen in Theorem \ref{kirch_auto}.

\begin{theorem}{\rm \cite{biggs1993algebraic}}\label{kirch_auto}
Let $G$ be a connected graph. If $\mu_1, \mu_2, ...,\mu_{n-1}$ are the nonzero eigenvalues of $\mathbf{L}(G)$, then
$$\tau(G)=\frac{1}{n} \prod_{i=1}^{n-1} \mu_i .$$
\end{theorem}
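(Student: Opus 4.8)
The plan is to derive the eigenvalue formula from the classical cofactor version of Kirchhoff's matrix tree theorem, which asserts that every cofactor of $\mathbf{L}(G)$ equals $\tau(G)$; in particular each of the $n$ principal cofactors, obtained by deleting the $i$-th row and column, equals $\tau(G)$. First I would record the two facts already available in the excerpt: $\mathbf{L}(G)$ is symmetric and positive semidefinite, and since $G$ is connected the eigenvalue $0$ is simple, so its spectrum satisfies $\mu_1 \geq \cdots \geq \mu_{n-1} > \mu_n = 0$ and $\mu_1,\dots,\mu_{n-1}$ are exactly the nonzero eigenvalues appearing in the statement.

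The core of the argument is to evaluate the coefficient of $x$ in the characteristic polynomial $p(x) = \det(xI - \mathbf{L}(G))$ in two different ways. On the spectral side, factoring out the zero eigenvalue gives
$$p(x) = x\prod_{i=1}^{n-1}(x-\mu_i),$$
so the coefficient of $x^{1}$ equals the constant term of $\prod_{i=1}^{n-1}(x-\mu_i)$, namely $(-1)^{n-1}\prod_{i=1}^{n-1}\mu_i$. On the combinatorial side, I would use the standard expansion of a characteristic polynomial in terms of principal minors: the coefficient of $x^{n-k}$ in $\det(xI-\mathbf{L}(G))$ is $(-1)^{k}$ times the sum of all $k\times k$ principal minors of $\mathbf{L}(G)$. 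Taking $k=n-1$, the coefficient of $x^{1}$ equals $(-1)^{n-1}$ times the sum of the $n$ principal minors of order $n-1$.

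The key observation linking the two computations is that each principal minor of order $n-1$ of $\mathbf{L}(G)$ is exactly one of the principal cofactors, and by Kirchhoff's theorem every one of them equals $\tau(G)$. Hence their sum is $n\,\tau(G)$, and equating the two expressions for the coefficient of $x$ yields
$$(-1)^{n-1}\prod_{i=1}^{n-1}\mu_i = (-1)^{n-1}\,n\,\tau(G),$$
from which $\tau(G) = \frac{1}{n}\prod_{i=1}^{n-1}\mu_i$ follows immediately after cancelling the common sign.

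I expect the main obstacle to be the justification of the two auxiliary ingredients rather than the final algebra: namely the matrix-tree theorem itself, that all cofactors of $\mathbf{L}(G)$ coincide and equal $\tau(G)$, and the identity expressing characteristic-polynomial coefficients as signed sums of principal minors. The former is the substantive result, usually proved via the Cauchy--Binet formula applied to an oriented incidence matrix, and since it is precisely what is cited in the excerpt I would invoke it directly; the latter is a standard determinant expansion. The connectedness hypothesis enters only to guarantee that $0$ is a simple eigenvalue, so that exactly the factor $x$ and no higher power of $x$ is extracted and the product runs over the genuinely nonzero eigenvalues.
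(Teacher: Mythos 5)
Your proof is correct, but there is nothing in the paper to compare it against: the paper does not prove this statement, it quotes it from Biggs \cite{biggs1993algebraic} as a known consequence of Kirchhoff's matrix tree theorem (the text preceding it says exactly that). Your argument supplies the standard derivation that the citation stands in for: evaluate the coefficient of $x$ in $\det(xI-\mathbf{L}(G))$ in two ways, spectrally via the factorization $x\prod_{i=1}^{n-1}(x-\mu_i)$, giving $(-1)^{n-1}\prod_{i=1}^{n-1}\mu_i$, and combinatorially as $(-1)^{n-1}$ times the sum of the $n$ principal minors of order $n-1$, each of which equals a principal cofactor and hence $\tau(G)$ by the cofactor form of the matrix tree theorem, giving $(-1)^{n-1}\,n\,\tau(G)$. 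The sign bookkeeping is right, the identification of principal $(n-1)\times(n-1)$ minors with principal cofactors is right (the sign $(-1)^{i+i}$ is $+1$), and you correctly isolate where connectedness is used, namely to make $0$ a simple eigenvalue so that $\mu_1,\dots,\mu_{n-1}$ are exactly the nonzero eigenvalues. The one caveat, which you flag yourself, is that the substantive result (all cofactors of $\mathbf{L}(G)$ equal $\tau(G)$) is invoked rather than proved; since that is precisely what the paper's citation covers, this is an acceptable division of labor, and your proposal is a faithful reconstruction of the reduction the paper leaves implicit.
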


Theorem \ref{theo:nro-spanning-trees} is an immediate consequence of Theorems 
\ref{theo:relation-ktree-block}, \ref{theo:nro-clique-tree}, 
 \ref{theo:equality}  and \ref{theo:block-graph-k-tree}.

\begin{theorem}\label{theo:nro-spanning-trees}
Let $G$ be a connected block graph and $\mathbb Q$ its set of maximal cliques.
The number of spanning trees of  $G$ is
\begin{equation}\label{eq:formulaspanningtreeblockgraph}
\tau(G) = \prod_{Q \in {\mathbb Q}} \left(|Q| \right)^{|Q|-2}.
\end{equation}
\end{theorem}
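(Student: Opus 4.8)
The plan is to reduce the statement to the counting results already proved for $k$-trees, by realizing the given block graph as a reduced clique graph rather than working with its Laplacian directly. First I would invoke Theorem \ref{theo:block-graph-k-tree}: since $G$ is a connected block graph, there exists a $k$-tree $H$ (for a suitable $k$) with $G = C_r(H)$. This single observation transfers the whole problem from the block graph $G$, where we only have the combinatorics of its maximal cliques, to the $k$-tree $H$, where the machinery of minimal vertex separators and their multiplicities is available.

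Next I would chain the two counting theorems. Applying Theorem \ref{theo:equality} to $H$ gives that the number of clique trees of $H$ equals the number of spanning trees of $C_r(H)=G$, that is, $\tau(G)$. Feeding in the closed form of Theorem \ref{theo:nro-clique-tree} then yields
\[
\tau(G) = \prod_{S \in {\mathbb S}} \left(\mu(S)+1\right)^{\mu(S)-1},
\]
where ${\mathbb S}$ is the set of minimal vertex separators of $H$. The remaining work is a change of index from the separators of $H$ to the maximal cliques of $G$: here Theorem \ref{theo:relation-ktree-block} is the key, since it supplies a bijection $S \mapsto Q$ between ${\mathbb S}$ and the set ${\mathbb Q}$ of maximal cliques of $C_r(H)=G$, under which $\mu(S)=|Q|-1$. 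Substituting $\mu(S)+1=|Q|$ and $\mu(S)-1=|Q|-2$ turns each factor into $|Q|^{|Q|-2}$ and reindexes the product over ${\mathbb Q}$, delivering precisely (\ref{eq:formulaspanningtreeblockgraph}).

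Since the computation collapses to a one-line substitution once the reduction is in place, I expect the only real obstacle to be bookkeeping rather than any genuine difficulty: I must confirm that the correspondence of Theorem \ref{theo:relation-ktree-block} is an exact bijection, so that every maximal clique of $G$ is the image of one and only one separator of $H$ and no factor is dropped or double-counted when passing between ${\mathbb S}$ and ${\mathbb Q}$. As a consistency check I would also treat the degenerate case separately: when $G$ is a single block $K_n$, Theorem \ref{theo:block-graph-k-tree} realizes it as the reduced clique graph of a $k$-star carrying one separator of multiplicity $n-1$, and the formula collapses to $\tau(K_n)=n^{n-2}$, recovering Cayley's formula and confirming that the general product behaves correctly at the boundary.
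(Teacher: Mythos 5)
Your proposal is correct and follows exactly the route the paper intends: the paper states this theorem as an immediate consequence of Theorems \ref{theo:relation-ktree-block}, \ref{theo:nro-clique-tree}, \ref{theo:equality} and \ref{theo:block-graph-k-tree}, which is precisely the chain you spell out (realize $G$ as $C_r(H)$ for a $k$-tree $H$, equate $\tau(G)$ with the number of clique trees of $H$, and reindex the product from separators of $H$ to blocks of $G$ via $\mu(S)=|Q|-1$). Your explicit attention to the bijectivity of the separator--block correspondence and the $K_n$ boundary check are reasonable additions that the paper leaves implicit.
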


The conclusion of Theorem \ref{theo:nro-spanning-trees} generalizes the expression obtained in 
Theorem 4  of \cite{abiad2017Wiener} (see formula (3)),  which furnishes  the number of spanning trees of a connected block graph with all blocks of  same size.

\begin{corollary}
Let $G$ be a connected block graph and $\mathbb Q$ its set of maximal cliques. If $\mu_1, \mu_2, ...,\mu_{n-1}$ are the nonzero eigenvalues of $\mathbf{L}(G)$, then
$$\frac{1}{n} \prod_{i=1}^{n-1} \mu_i = \prod_{Q \in {\mathbb Q}} \left(|Q| \right)^{|Q|-2} .$$
\end{corollary}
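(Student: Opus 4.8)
The final statement is the Corollary, which asserts the equality
$$\frac{1}{n} \prod_{i=1}^{n-1} \mu_i = \prod_{Q \in {\mathbb Q}} \left(|Q| \right)^{|Q|-2}$$
for a connected block graph $G$. The plan is to observe that this is not a new computation at all, but a direct juxtaposition of two expressions that the excerpt has already shown both equal $\tau(G)$. On the left, Theorem \ref{kirch_auto} (Kirchhoff's matrix tree theorem) gives $\tau(G) = \frac{1}{n}\prod_{i=1}^{n-1}\mu_i$, valid for any connected graph and hence in particular for $G$. On the right, Theorem \ref{theo:nro-spanning-trees} gives $\tau(G) = \prod_{Q \in {\mathbb Q}}(|Q|)^{|Q|-2}$ for a connected block graph. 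Since both right-hand sides equal the same quantity $\tau(G)$, they equal each other.

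Concretely, first I would invoke Theorem \ref{kirch_auto}, noting that its hypothesis (connectedness) is met, so the product of nonzero Laplacian eigenvalues divided by $n$ computes the spanning-tree count. Second, I would invoke Theorem \ref{theo:nro-spanning-trees}, whose hypothesis (connected block graph) is exactly the standing assumption of the corollary, to rewrite $\tau(G)$ as the clique product. Chaining the two identities through the common value $\tau(G)$ yields the claimed equality immediately. No further manipulation is required because both theorems are already in hand as stated results in the excerpt.

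There is essentially no obstacle here: the corollary is a formal consequence, and the entire content lies in the two theorems it cites, not in any fresh argument. The only point worth a sentence is confirming that the hypotheses line up — that a connected block graph is in particular a connected graph, so Kirchhoff applies, and that Theorem \ref{theo:nro-spanning-trees} applies verbatim. If I wanted to be thorough I would remark that the real mathematical work was done earlier, in establishing Theorem \ref{theo:nro-spanning-trees} via the block-graph / $k$-tree correspondence (Theorems \ref{theo:block-graph-k-tree} and \ref{theo:equality}) together with the clique-tree count of Theorem \ref{theo:nro-clique-tree}; the corollary merely records the spectral identity that results. A complete proof would therefore read in two lines:

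\begin{proof}
Since $G$ is connected, Theorem \ref{kirch_auto} gives $\tau(G)=\frac{1}{n}\prod_{i=1}^{n-1}\mu_i$. Since $G$ is a connected block graph, Theorem \ref{theo:nro-spanning-trees} gives $\tau(G)=\prod_{Q \in {\mathbb Q}}(|Q|)^{|Q|-2}$. Equating the two expressions for $\tau(G)$ yields the result.
\qed
\end{proof}
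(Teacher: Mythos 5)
Your proof is correct and matches the paper's (implicit) argument exactly: the paper states this corollary without proof precisely because it follows immediately by equating the Kirchhoff expression of Theorem \ref{kirch_auto} with the block-graph formula of Theorem \ref{theo:nro-spanning-trees}, both of which compute $\tau(G)$. Your two-line proof is exactly the intended justification.
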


The efficiency of the determination of the number of  spanning cliques of a connected block graph 
is proved in  Theorem \ref{theo:number-spanning-trees}.
It relies on a compact representation of chordal graphs,
presented in  Markenzon {\em et al.} \cite{markenzon2013efficient}.
Based on this representation,  it was possible to state the next theorem.

\begin{theorem}{\rm \cite{markenzon2013efficient}}\label{theo:sum-max-cliques}
Let $G=(V,E)$ be a connected chordal graph and $\mathbb Q$ its set of maximal cliques.
 Then $$ \sum_{Q\in {\mathbb Q}} |Q| < n + m.$$
\end{theorem}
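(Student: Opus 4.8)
The plan is to fix a clique tree $T$ of $G$ (which exists since $G$ is connected and chordal) and to rewrite both sides of the inequality as data attached to $T$. Recall from Section~\ref{sec:headings} that the edges of $T$ correspond to the minimal vertex separators of $G$, the separator attached to an edge $\{Q,Q'\}$ being $Q\cap Q'$. I would root $T$ at an arbitrary maximal clique $R$, write $P(Q)$ for the parent of a non-root clique $Q$, and assign each vertex $v\in V$ to the unique \emph{topmost} clique containing it. This clique is well defined because the cliques containing $v$ form a subtree of $T$ (the running intersection property of clique trees; see \cite{blair1993introduction}), so they have a unique node nearest the root. Counting each vertex once at the clique to which it is assigned gives $|R|$ vertices at $R$ and $|Q|-|Q\cap P(Q)|$ vertices at each non-root $Q$, hence the identity
\[
\sum_{Q\in{\mathbb Q}}|Q| \;=\; n \;+\; \sum_{Q\neq R}\,|Q\cap P(Q)|.
\]
Since the right-hand sum is exactly the total separator size over all edges of $T$, the assertion $\sum_{Q}|Q|<n+m$ is equivalent to the purely combinatorial claim $\sum_{Q\neq R}|Q\cap P(Q)|<m$.

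To prove that claim I would build an injection from separator incidences into edges of $G$. For each non-root clique $Q$ set $S_Q=Q\cap P(Q)$ and choose a vertex $a_Q\in Q\setminus P(Q)$; this set is nonempty because $Q$ is a maximal clique not contained in its parent. For every $s\in S_Q$ the vertices $a_Q$ and $s$ lie together in the clique $Q$ and are distinct (as $s\in P(Q)$ but $a_Q\notin P(Q)$), so $\{a_Q,s\}$ is an edge of $G$; define $\phi(Q,s)=\{a_Q,s\}$ on the incidence set $\{(Q,s):Q\neq R,\ s\in S_Q\}$, whose cardinality is $\sum_{Q\neq R}|S_Q|$. The engine of injectivity is again running intersection: calling the topmost clique containing $v$ the \emph{apex} of $v$, the condition $a_Q\in Q\setminus P(Q)$ says precisely that $Q$ is the apex of $a_Q$. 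If $\phi(Q,s)=\phi(Q',s')$, then either the incidences coincide (using apex-uniqueness to force $Q=Q'$ when the two distinguished endpoints match), or one lands in the ``swapped'' configuration $a_Q=s'$ and $s=a_{Q'}$. In that configuration $a_Q\in P(Q')$ while $a_Q$ has apex $Q$, which forces $Q$ to be an ancestor of $Q'$; symmetrically $a_{Q'}$ forces $Q'$ to be an ancestor of $Q$ — impossible in a tree. Hence $\phi$ is injective and $\sum_{Q\neq R}|S_Q|\le m$.

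Finally, I would upgrade this to a strict inequality by exhibiting one edge outside the image of $\phi$. Since $G$ is connected with $n\ge 2$ it has no isolated vertex, so every maximal clique has at least two vertices; in particular the root $R$ contains an edge $\{x,y\}$. As $R$ is the global topmost clique, the apex of each of $x$ and $y$ is $R$, whereas every edge in the image of $\phi$ has an endpoint $a_Q$ whose apex is a \emph{non-root} clique $Q\neq R$. Therefore $\{x,y\}$ cannot be in the image, giving $\sum_{Q\neq R}|S_Q|<m$ and, via the identity above, $\sum_{Q}|Q|<n+m$. I expect the injectivity argument to be the main obstacle, and within it the swapped case in particular: it is the only place where two genuinely different incidences could a priori yield the same unordered edge, and ruling it out needs the two-sided ancestor contradiction rather than a single apex-uniqueness step. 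I would also note that the degenerate case $n=1$ (where the strict inequality fails) must be excluded or folded into the connectivity hypothesis.
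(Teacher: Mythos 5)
Your proof is correct, but note that the paper itself offers no proof to compare against: Theorem \ref{theo:sum-max-cliques} is quoted from \cite{markenzon2013efficient}, where (as the surrounding text indicates) it is obtained as a by-product of a compact representation of chordal graphs. Your argument is therefore a genuinely different, self-contained route. The rooted-clique-tree bookkeeping is sound: by the running intersection property each vertex has a well-defined apex, the apex count gives exactly the identity $\sum_{Q\in{\mathbb Q}}|Q| = n + \sum_{Q\neq R}|Q\cap P(Q)|$ (the correction term being the multiset sum $\sum_{S\in{\mathbb M}}|S|$ of separator sizes), and this reduces the theorem to bounding total separator size by $m$. Your injection $\phi(Q,s)=\{a_Q,s\}$ works, and you correctly identified the one delicate point: in the swapped configuration the single apex-uniqueness step does not suffice, and the two-sided ancestor contradiction ($Q$ a proper ancestor of $Q'$ and vice versa) is exactly what closes it. The strictness step via an edge of the root clique, both of whose endpoints have apex $R$ while every image edge has an endpoint with non-root apex, is also valid, and you were right to flag the degenerate case: for $G=K_1$ one has $\sum_{Q}|Q|=1=n+m$, so the inequality as stated in the paper silently presupposes $n\geq 2$ (equivalently $m\geq 1$), a hypothesis the quoted statement omits. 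Compared with the cited derivation, your approach buys transparency and elementarity — it uses only the clique-tree machinery already set up in Section \ref{sec:headings} — and it additionally makes visible the exact slack in the bound, namely $m$ minus the multiset separator sum, whereas the representation-based argument of \cite{markenzon2013efficient} buys the data structure that the paper then reuses algorithmically in Theorem \ref{theo:number-spanning-trees}.
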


\begin{theorem}\label{theo:number-spanning-trees}
The determination of the number of spanning trees of a connected block graph can be performed in linear time complexity.
\end{theorem}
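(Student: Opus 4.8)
The plan is to combine the explicit formula of Theorem \ref{theo:nro-spanning-trees}, namely $\tau(G) = \prod_{Q \in {\mathbb Q}} |Q|^{|Q|-2}$, with the linear-time machinery for extracting the maximal cliques of a chordal graph. The computation splits naturally into two tasks: first, obtaining the set ${\mathbb Q}$ of maximal cliques of $G$ together with their cardinalities; second, evaluating the product over ${\mathbb Q}$. I would argue that each of these can be carried out in $O(n+m)$ time, so that their composition is linear as well.

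For the first task, I would invoke the fact that a block graph is chordal and use the compact representation of chordal graphs of Markenzon \emph{et al.} \cite{markenzon2013efficient}, the same representation underlying Theorem \ref{theo:sum-max-cliques}. From this representation the maximal cliques of $G$ and their sizes are available in $O(n+m)$ time. (Equivalently, since in a block graph the maximal cliques are exactly the blocks, a standard biconnected-components traversal also yields ${\mathbb Q}$ in linear time, but I would lean on the representation already introduced in the paper.)

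For the second task, I would evaluate $\tau(G)$ by scanning the list of maximal cliques once and accumulating a running product. Forming a single factor $|Q|^{|Q|-2}$ costs at most $|Q|-2$ multiplications, so the total number of multiplications needed to assemble the whole product is at most $\sum_{Q \in {\mathbb Q}} (|Q|-2) \le \sum_{Q \in {\mathbb Q}} |Q|$. This is exactly where Theorem \ref{theo:sum-max-cliques} does the essential work: it guarantees $\sum_{Q \in {\mathbb Q}} |Q| < n+m$, so the number of arithmetic operations is $O(n+m)$. Combining the two tasks gives an overall $O(n+m)$ procedure, which is the claim.

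The step I expect to be the crux is the bookkeeping in the second task. A priori the exponents $|Q|-2$ could be large, so the naive worry is that raising each $|Q|$ to a high power might be costly; the resolution is that the exponents are themselves clique sizes, and a single clique cannot be large unless the graph carries correspondingly many edges. Theorem \ref{theo:sum-max-cliques} caps the sum of all clique sizes by $n+m$, which is precisely what amortizes the repeated exponentiations down to linear total cost, under the unit-cost arithmetic model used throughout the paper.
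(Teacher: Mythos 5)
Your proposal is correct and follows essentially the same route as the paper: invoke the formula of Theorem \ref{theo:nro-spanning-trees}, extract the maximal cliques in linear time, and evaluate each factor $|Q|^{|Q|-2}$ by repeated multiplication, bounding the total number of multiplications by $\sum_{Q\in{\mathbb Q}}|Q| < n+m$ via Theorem \ref{theo:sum-max-cliques}. The only cosmetic difference is the citation used for the clique-extraction step (the paper cites the linear-time algorithm of Markenzon \emph{et al.} directly, while you lean on the compact representation and note the blocks-as-cliques alternative), which does not change the argument.
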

\begin{proof}
The determination of the maximal cliques  
is performed in $O(m)$ complexity time \cite{markenzon2010one}.
Let  ${\mathbb Q} = Q_1, \ldots Q_{|{\mathbb Q|}}$. 
Each factor of equation (\ref{eq:formulaspanningtreeblockgraph}) is $(|Q_i|)^{|Q_i|-2}$.
So, it corresponds to a product of $|Q_i|-2$ times a constant value, $|Q_i|$;
each factor has $|Q_i|-2$ factors.
Hence, the total number of factors is less than $\sum_{Q\in{\mathbb Q}} |Q|$, that, by Theorem \ref{theo:sum-max-cliques},
is less than $n+m$.
\qed
\end{proof}

\begin{corollary}
Let $G$ be a connected block graph. 
If $\mu_1, \mu_2, ...,\mu_{n-1}$ are the nonzero eigenvalues of $\mathbf{L}(G)$ then 
$\prod_{i=1}^{n-1} \mu_i $ can be determined in linear time complexity.
\end{corollary}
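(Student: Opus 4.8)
The plan is to reduce the claim to two separate cost estimates: the cost of extracting the combinatorial data on which equation (\ref{eq:formulaspanningtreeblockgraph}) depends, and the cost of evaluating that product. Since the formula of Theorem \ref{theo:nro-spanning-trees} depends only on the set $\mathbb{Q}$ of maximal cliques and on the cardinalities $|Q|$, the first task is to compute $\mathbb{Q}$ together with each $|Q|$. Here I would invoke the linear-time clique-enumeration procedure for chordal graphs of \cite{markenzon2010one} (a connected block graph being in particular chordal), which produces all maximal cliques in $O(m)$ time; recording $|Q|$ for each clique as it is generated adds no asymptotic overhead. Thus the data-extraction phase is already linear.

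The decisive task is then to bound the arithmetic. The plan is to evaluate each factor $(|Q|)^{|Q|-2}$ by repeated multiplication, so that the factor indexed by $Q$ costs exactly $|Q|-2$ multiplications. Summing over $\mathbb{Q}$, the total number of multiplications needed to form the product is $\sum_{Q\in\mathbb{Q}}(|Q|-2)$, which is bounded above by $\sum_{Q\in\mathbb{Q}}|Q|$. The key structural input is Theorem \ref{theo:sum-max-cliques}, which guarantees $\sum_{Q\in\mathbb{Q}}|Q| < n+m$. Combining the two estimates, the whole computation runs in $O(n+m)$ operations, which is linear in the size of $G$, as required. (As usual in this setting I count arithmetic operations at unit cost, following the convention of the preceding complexity theorems.)

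The step I expect to be the main obstacle is precisely the bound on $\sum_{Q\in\mathbb{Q}}|Q|$. A priori a chordal graph may possess many large, heavily overlapping maximal cliques, and without a uniform control on their total size the repeated-multiplication evaluation could fail to be linear; the naive worry is that $|\mathbb{Q}|$ or the individual $|Q|$ might be large enough to spoil the count. What rescues the argument is exactly Theorem \ref{theo:sum-max-cliques}, so the proof hinges on being able to cite that bound. Once it is in hand, the two remaining ingredients, namely the linear-time clique extraction and the elementary accounting of the number of multiplications, are routine, and the linear-time conclusion follows directly.
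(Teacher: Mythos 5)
Your computational core is sound and is essentially the paper's own argument: the paper establishes exactly your two estimates in Theorem \ref{theo:number-spanning-trees} (maximal-clique extraction in $O(m)$ via \cite{markenzon2010one}, then evaluation of equation (\ref{eq:formulaspanningtreeblockgraph}) using at most $\sum_{Q\in{\mathbb Q}}|Q| < n+m$ multiplications by Theorem \ref{theo:sum-max-cliques}), and then treats the present corollary as an immediate consequence. However, as written your proposal has a gap: it never states the identity connecting the quantity actually asked for, $\prod_{i=1}^{n-1}\mu_i$, to the combinatorial product you evaluate. What your two cost estimates deliver is $\tau(G)=\prod_{Q\in{\mathbb Q}}|Q|^{|Q|-2}$, and this is \emph{not} the eigenvalue product: by Theorem \ref{kirch_auto} (equivalently, by the corollary following Theorem \ref{theo:nro-spanning-trees}), one has $\prod_{i=1}^{n-1}\mu_i = n\,\tau(G)$, so the number your procedure outputs is off by a factor of $n$. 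This bridge is the entire point of the corollary --- a spectral quantity that a priori requires eigenvalue computation is replaced by a purely combinatorial formula --- so the step invoking Kirchhoff's theorem has to appear explicitly; without it you have only reproved Theorem \ref{theo:number-spanning-trees}, not the statement at hand.

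The repair is immediate: cite Theorem \ref{kirch_auto} to write $\prod_{i=1}^{n-1}\mu_i = n\cdot\prod_{Q\in{\mathbb Q}}|Q|^{|Q|-2}$, run your computation of the right-hand product, and finish with one additional multiplication by $n$; this adds $O(1)$ work, so the linear bound stands. With that one sentence added, your proof is complete and coincides with the paper's (implicit) one.
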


\section*{Acknowledgments}

This work was supported by grant 304706/2017-5, CNPq, Brazil.

\bibliographystyle{unsrtnat}
\bibliography{Arx-MarOliVin-2023}  






\end{document}